\newtheorem{theorem}{Theorem}[section]
\newtheorem{lemma}[theorem]{Lemma}
\newtheorem{proposition}[theorem]{Proposition}
\newcommand{\be}{\begin{equation}}
\newcommand{\ee}{\end{equation}}
\newcommand{\ba}{\begin{array}}
\newcommand{\ea}{\end{array}}
\newcommand{\bpm}{\begin{pmatrix}}
\newcommand{\epm}{\end{pmatrix}}
\newcommand{\bea}{\begin{eqnarray}}
\newcommand{\eea}{\end{eqnarray}}
\newcommand{\beaa}{\begin{eqnarray*}}
\newcommand{\eeaa}{\end{eqnarray*}}
\newcommand{\bal}{\begin{align}}
\newcommand{\eal}{\end{align}}
\newcommand{\baln}{\begin{align*}}
\newcommand{\ealn}{\end{align*}}
\newcommand{\vu}{u}
\newcommand{\vv}{v}
\newcommand{\vareps}{\varepsilon}
\newcommand{\vS}{S}
\newcommand{\cA}{{\mathcal{A}}}
\newcommand{\cK}{{\mathcal{K}}}
\newcommand{\cL}{{\mathcal{L}}}
\newcommand{\cN}{{\mathcal{N}}}
\newcommand{\cP}{{\mathcal{P}}}
\newcommand{\cU}{{\mathcal{U}}}
\newcommand{\cX}{{\mathcal{X}}}
\newcommand{\RR}{\mathbb{R}} 
\newcommand{\st}{\mbox{ s.t. }}
\DeclareMathOperator*{\argmin}{arg\,min} 
\newcommand{\bc}{\begin{center}}
\newcommand{\ec}{\end{center}}
\newcommand{\bdm}{\begin{displaymath}}
\newcommand{\edm}{\end{displaymath}}
\newcommand{\beq}{\begin{equation}}
\newcommand{\eeq}{\end{equation}}
\newcommand{\bfl}{\begin{flushleft}}
\newcommand{\efl}{\end{flushleft}}
\newcommand{\bt}{\begin{tabbing}}
\newcommand{\et}{\end{tabbing}}
\newcommand{\beqn}{\begin{eqnarray}}
\newcommand{\eeqn}{\end{eqnarray}}
\newcommand{\beqs}{\begin{align*}} 
\newcommand{\eeqs}{\end{align*}}  
\newtheorem{assumption}{Assumption}
\begin{document}

\title{Hybrid Jacobian and Gauss-Seidel proximal block coordinate update methods for linearly constrained convex programming}

\author{Yangyang Xu\thanks{xuy21@rpi.edu. Department of Mathematical Sciences, Rensselaer Polytechnic Institute, Troy, NY 12180}
}

\date{}

\maketitle

\begin{abstract}
Recent years have witnessed the rapid development of block coordinate update (BCU) methods, which are particularly suitable for problems involving large-sized data and/or variables. In optimization, BCU first appears as the coordinate descent method that works well for smooth problems or those with separable nonsmooth terms and/or separable constraints. As nonseparable constraints exist, BCU can be applied under primal-dual settings.

In the literature, it has been shown that for weakly convex problems with nonseparable linear constraint, BCU with fully Gauss-Seidel updating rule may fail to converge and that with fully Jacobian rule can converge sublinearly. However, empirically the method with Jacobian update is usually slower than that with Gauss-Seidel rule. To maintain their advantages, we propose a hybrid Jacobian and Gauss-Seidel BCU method for solving linearly constrained multi-block structured convex programming, where the objective may have a nonseparable quadratic term and separable nonsmooth terms. At each primal block variable update, the method approximates the augmented Lagrangian function at an affine combination of the previous two iterates, and the affinely mixing matrix with desired nice properties can be chosen through solving a semidefinite programming. We show that the hybrid method enjoys the  theoretical convergence  guarantee as Jacobian BCU. In addition, we numerically demonstrate that the method can perform as well as Gauss-Seidel method and better than a recently proposed randomized primal-dual BCU method.

\vspace{0.5cm}

\noindent {\bf Keywords:} block coordinate update (BCU), Jacobian rule, Gauss-Seidel rule, alternating direction method of multipliers (ADMM)
\vspace{0.5cm}

\noindent {\bf Mathematics Subject Classification:} 9008, 90C25, 90C06, 68W40.

\end{abstract}

\section{Introduction}
Driven by modern applications in image processing, statistical and machine learning, block coordinate update (BCU) methods have revived in recent years. BCU methods decompose a complicated large-scale problem into easy small subproblems and tend to have low per-update complexity and low memory requirement, and they give rise to powerful ways to handle problems involving large-sized data and/or variables. These methods originate from the coordinate descent method that only applies to optimization problems with separable constraints. Under primal-dual settings, they have been developed to deal with nonseparably constrained problems.

In this paper, we consider the linearly constrained multi-block structured problem with a quadratic term in the objective:
\begin{equation}\label{eq:mb-prob}
\min_x \frac{1}{2}x^\top Qx+\sum_{i=1}^m g_i(x_i), \st A x=b,
\end{equation}
where the variable is partitioned into $m$ disjoint blocks $x=(x_1,x_2,\ldots,x_m)$, $Q$ is a positive semidefinite (PSD) matrix, and each $g_i$ is a proper closed convex and possibly non-differentiable function. Note that part of $g_i$ can be an indicator function of a convex set $\cX_i$, and thus \eqref{eq:mb-prob} can implicitly include certain separable block constraints $x_i\in\cX_i$ in addition to the nonseparable linear constraint. 

Due to its multi-block and also coordinate friendly \cite{peng2016cf} structure, we will derive a BCU method for solving \eqref{eq:mb-prob}, by performing BCU to $x_i$'s based on the augmented Lagrangian function of \eqref{eq:mb-prob}, followed by an update to the Lagrangian multiplier; see the updates in \eqref{eq:update-x} and \eqref{eq:update-lam}.

\subsection{Motivations}
This work is motivated from two aspects. First, many applications can be formulated in the form of \eqref{eq:mb-prob}. Second, although numerous optimization methods can be applied to these problems, few of them are reliable and also efficient. Hence, we need a novel algorithm that can be applied to all these applications and also has a nice theoretical convergence result.
 
\subsection*{Motivating examples}
 If $g_i$ is the indicator function of the nonnegative orthant for all $i$, then it reduces to the nonnegative linearly constrained quadratic programming (NLCQP):
\begin{equation}\label{eq:nonneg-qp}
\min_x \frac{1}{2} x^\top Q x + d^\top x, \st Ax=b,\, x\ge 0.
\end{equation}
All convex QPs can be written as NLCQP by adding slack variable and/or decomposing a free variable into positive and negative parts. As $Q$ is a huge-sized matrix, it would be beneficial to partition it into block matrices and correspondingly $x$ and $A$ into block variables and block matrices, and then apply BCU methods toward finding a solution to \eqref{eq:nonneg-qp}. 

Another example is the constrained Lasso regression problem proposed in \cite{james2012classo}: 
\begin{equation}\label{eq:classo}
\min_x \frac{1}{2}\|Ax-b\|^2 + \mu\|x\|_1, \st Cx\le d.
\end{equation}
If there is no constraint $Cx\le d$, \eqref{eq:classo} simply reduces to the Lasso regression problem \cite{tibshirani1996lasso}. Introducing a nonnegative slack variable $y$, we can write \eqref{eq:classo} into the form of \eqref{eq:mb-prob}:
\begin{equation}\label{eq:classo2}
\min_{x, y} \frac{1}{2}\|Ax-b\|^2 + \mu\|x\|_1 + \iota_+(y), \st Cx+y = d,
\end{equation}
where $\iota_+(y)$ is the indicator function of the nonnegative orthant, equaling \emph{zero} if $y$ is nonnegative and $+\infty$ otherwise. Again for large-sized $A$ or $C$, it is preferable to partition $x$ into disjoint blocks and apply BCU methods to \eqref{eq:classo2}.

There are many other examples arising in signal and image processing and machine learning such as the compressive principal component pursuit \cite{wright2013compress-PCP} (see \eqref{eq:cpcp2} below) and the regularized multiclass support vector machines \cite{xu2015admm-msvm} (see \eqref{eq:msvm} below for instance). More examples, including basis pursuit, conic programming, and the exchange problem, can be found in \cite{he2012ADM-GBS, deng2016parallel, hong2014block, sun2015convergent, li2016schur} and the references therein.

\subsection*{Reliable and efficient algorithms}
Towards a solution to \eqref{eq:mb-prob}, one may apply any traditional method, such as projected subgradient method, augmented Lagrangian method (ALM), and the interior-point method \cite{nocedal2006numopt}. However, these methods do not utilize the block structure of the problem and are not fit to very large-scale problems. To utilize the block structure, BCU methods are preferable. For unconstrained or block-constrained problems, recent works (e.g., \cite{XY_2013_multiblock, razaviyayn2013unified, XY_2017_ecd, wright2015coordinate, hong2016iteration}) have shown that BCU can be theoretically reliable and also practically efficient. Nonetheless, for problems with nonseparable linear constraints, most existing BCU methods either require strong assumptions for convergence guarantee or converge slowly; see the review in section \ref{sec:review} below. Exceptions include \cite{he2012ADM-GBS, he2017convergence} and \cite{dang2014randomized, GXZ-RPDCU2016}. However, the former two only consider separable convex problems, i.e., without the nonseparable quadratic term in \eqref{eq:mb-prob}, and the convergence result established by the latter two is stochastic rather than worse-case. In addition, numerically we notice that the randomized method in \cite{GXZ-RPDCU2016} performs not so well when the number of blocks is small. Our novel algorithm utilizes the block structure of \eqref{eq:mb-prob} and also enjoys fast worse-case convergence rate under mild conditions.

\subsection{Related works}\label{sec:review}
BCU methods in optimization first appear in \cite{Hildreth-57} as the coordinate descent (CD) method for solving quadratic programming with separable nonnegative constraints but without nonseparable equality constraint. The CD method updates one coordinate every time while all the remaining ones are fixed. It may stuck at a non-stationary point if there are nonseparable nonsmooth terms in the objective; see the example in \cite{Warga-63, shi2016primer-cd}. On solving smooth problems or those with separable nonsmooth terms, the convergence properties of the CD method have been intensively studied (e.g., \cite{Tseng-01, tseng2009_CGD, XY_2013_multiblock, razaviyayn2013unified, XY_2017_ecd, wright2015coordinate, hong2016iteration}). For the linearly constrained problem \eqref{eq:mb-prob}, the CD method can also stuck at a non-stationary point, for example, if the linear constraint is simply $x_1-x_2=0$. Hence, to directly apply BCU methods to linearly constrained problems, at least two coordinates need be updated every time; see \cite{tseng2009block-linear, necoara2014random-cd} for example.

Another way of applying BCU towards finding a solution to \eqref{eq:mb-prob} is to perform primal-dual block coordinate update (e.g., \cite{pesquet2014class, Peng_2016_AROCK, combettes2015stochastic, peng2016cf}). These methods usually first formulate the first-order optimality system of the original problem and then apply certain operator splitting methods. Assuming monotonicity of the iteratively performed operator (that corresponds to convexity of the objective), almost sure iterate sequence convergence to a solution can be shown, and with strong monotonicity assumption (that corresponds to strong convexity), linear convergence can be established.
 
In the literature, there are also plenty of works applying BCU to the augmented Lagrangian function as we did in \eqref{eq:update-x}. One popular topic is the alternating direction method of multipliers (ADMM) applied to separable multi-block structured problems, i.e., in the form of \eqref{eq:mb-prob} without the nonseparable quadratic term. Originally, ADMM was proposed for solving separable two-block problems \cite{Glowinski1975, gabay1976dual} by cyclicly updating the two block variables in a Gauss-Seidel way, followed by an update to the multiplier, and its convergence and also $O(1/t)$ sublinear rate is guaranteed by assuming merely weak convexity (e.g., \cite{monteiro2010iteration, he2012rate-DRS, LMZ2015JORSC}). While directly extended to problems with more than two blocks, ADMM may fail to converge as shown in \cite{chen2016direct} unless additional assumptions are made such as strong convexity on part of the objective (e.g., \cite{han2012note, chencai2013convergence, cai2014directstrong, LMZ2015JORSC, lin2015global, li2015convergent, davis2015-3op}), orthogonality condition on block coefficient matrices in the linear constraint (e.g., \cite{chen2016direct}), and Lipschitz differentiability of the objective and invertibility of the block matrix in the constraint about the last updated block variable (e.g., \cite{wang2015ncvx-admm}). For problems that do not satisfy these conditions, ADMM can be modified and have guaranteed convergence and even rate estimate by adding extra correction steps (e.g., \cite{he2012ADM-GBS, he2017convergence}) or by randomized block coordinate update (e.g., \cite{dang2014randomized, GXZ-RPDCU2016}) or adopting Jacobian updating rules (e.g., \cite{deng2016parallel, he2015full, li2016schur}) that essentially reduce the method to proximal ALM or two-block ADMM method.

When there is a nonseparable term coupling variables together in the objective like \eqref{eq:mb-prob}, existing works usually replace the nonseparable term by a relatively easier majorization function during the iterations and perform the upper-bound or majorized ADMM updates. For example, \cite{hong2014block} considers generally multi-block problems with nonseparable Lipschitz-differentiable term. Under certain error bound conditions and a diminishing dual stepsize assumption, it shows subsequence convergence, i.e., any cluster point of the iterate sequence is a primal-dual solution. Along a similar direction, \cite{cui2015convergence} specializes the method of \cite{hong2014block} to two-block problems and establishes global convergence and also $O(1/t)$ rate with any positive dual stepsize. Without changing the nonseparable term, \cite{gao2015first} adds proximal terms into the augmented Lagrangian function during each update and shows $O(1/t)$ sublinear convergence by assuming strong convexity on the objective. Very recently, \cite{chen2015Ext-ADMM} directly applies ADMM to \eqref{eq:mb-prob} with $m=2$, i.e., only two block variables, and establishes iterate sequence convergence to a solution while no rate estimate has been shown.

\subsection{Jacobian and Gauss-Seidel block coordinate update}
On updating one among the $m$ block variables, the Jacobi method uses the values of all the other blocks from the previous iteration while Gauss-Seidel method always takes the most recent values. For optimization problems without constraint or with block separable constraint, Jacobian BCU enjoys the same convergence as gradient descent or proximal gradient method, and Gauss-Seidel BCU is also guaranteed to converge under mild conditions (e.g., \cite{Tseng-01, XY_2013_multiblock, razaviyayn2013unified, hong2016iteration}). For linearly constrained problems, the Jacobi method converges to optimal value with merely weak convexity \cite{he2015full, deng2016parallel, he2016proximal}. However, the Gauss-Seidel update requires additional assumptions for convergence, though it can empirically perform better than the Jacobi method if it happens to converge. Counterexamples are constructed in \cite{chen2016direct, feng2015cvg-LADMM} to show possible divergence of Gauss-Seidel block coordinate update for linearly constrained problems. To guarantee convergence of the Gauss-Seidel update, many existing works assume strong convexity on the objective or part of it. For example, \cite{han2012note} considers linearly constrained convex programs with separable objective function. It shows the convergence of multiblock ADMM by assuming strong convexity on the objective. Sublinear convergence of multiblock proximal ADMM is established for problems with nonseparable objective in \cite{gao2015first}, which assumes strong convexity on the objective and also chooses parameters dependent on the strong convexity constant. On three-block case, \cite{cai2014directstrong} assumes strong convexity on the third block and also full column-rankness of the last two block coefficient matrices, and \cite{chencai2013convergence} assumes strong convexity on the last two blocks. There are also works that do not assume strong convexity but require some other conditions. For instance, \cite{hong2014block} considers linearly constrained convex problems with nonseparable objective. It shows the convergence of a majorized multiblock ADMM with diminishing dual stepsizes and by assuming a local error bound condition. The work \cite{chen2016direct} assumes orthogonality between two block matrices in the linear constraint and proves the convergence of three-block ADMM by reducing it to the classic two-block case. Intuitively, Jacobian block update is a linearized ALM, and thus in some sense, it is equivalent to performing an augmented dual gradient ascent to the multipliers \cite{liu2016non}. On the contrary, Gauss-Seidel update uses inexact dual gradient, and because the blocks are updated cyclicly, the error can accumulate; see \cite{sun2015expected} where random permutation is performed before block update to cancel the error and iterate convergence in expectation is established.

\subsection{Contributions}   We summarize our contributions as follows.
\begin{itemize}
\item We propose a hybrid Jacobian and Gauss-Seidel BCU method for solving \eqref{eq:mb-prob}. Through affinely combining two most recent iterates, the proposed method can update block variables, in Jacobian or Gauss-Seidel manner, or a mixture of them, Jacobian rules within groups of block variables and a Gauss-Seidel way between groups. It can enjoy the theoretical convergence guarantee as BCU with fully Jacobian update rule and also practical fast convergence as the method with fully Gauss-Seidel rule.
\item We establish global iterate sequence convergence and $O(1/t)$ rate results of the proposed BCU method, by assuming certain conditions on the affinely mixing matrix (see Assumption \ref{assump2}) and choosing appropriate weight matrix $P_i$'s in \eqref{eq:update-x}. Different from the after-correction step proposed in \cite{he2012ADM-GBS} to obtain convergence of multi-block ADMM with fully Gauss-Seidel rule, the affine combination of two iterates we propose can be regarded as a correction step before updating every block variable. In addition, our method allows nonseparable quadratic terms in the objective while the method in \cite{he2012ADM-GBS} can only deal with separable terms. Furthermore, utilizing prox-linearization technique, our method can have much simpler updates.
\item We discuss how to choose the affinely mixing matrix, which can be  determined through solving a semidefinite programming (SDP); see \eqref{eq:u-sdp}. One can choose a desired matrix by adding certain constraints in the SDP. Compared to the original problem, the SDP is much smaller and can be solved offline. We demonstrate that the algorithm with the combining matrix found in this way can perform significantly better than that with all-one matrix (i.e., with fully Jacobian rule).
\item We apply the proposed BCU method to quadratic programming, the compressive principal component pursuit (see \eqref{eq:cpcp} below), and the multi-class support vector machine (see \eqref{eq:msvm} below). By adapting $P_i$'s in \eqref{eq:update-x}, we demonstrate that our method can outperform a randomized BCU method recently proposed in \cite{GXZ-RPDCU2016} and is comparable to the direct ADMM that has no guaranteed convergence. Therefore, the proposed method can be a more reliable and also efficient algorithm for solving problems in the form of \eqref{eq:mb-prob}.
\end{itemize} 

\subsection{Outline}

The rest of the paper is organized as follows. In section \ref{sec:algorithm}, we present our algorithm, and we show its convergence with sublinear rate estimate in section \ref{sec:analysis}. In section \ref{sec:choose-w}, we discuss how to choose the affinely mixing matrices used in the algorithm. Numerical experiments are performed in section \ref{sec:experiment}. Finally, section \ref{sec:conclusion} concludes the paper.

\section{Algorithm}\label{sec:algorithm}
In this section, we present a BCU method for solving \eqref{eq:mb-prob}. Algorithm \ref{alg:hyb-admm} summarizes the proposed method. In the algorithm, $A_i$ denotes the $i$-th block matrix of $A$ corresponding to $x_i$, and 
\begin{equation}
\label{eq:hat-tilde-x}
f(x)=\frac{1}{2}x^\top Qx, \quad \hat{x}_j^{k,i}=x_j^{k+1}-{w}_{ij}(x_j^{k+1}-x_j^k),\ \forall i,j,
\end{equation}
where $w_{ij}$ is the $(i,j)$-th entry of $W$.
\begin{algorithm}\caption{Hybrid Jacobian and Gauss-Seidel proximal block coordinate update for \eqref{eq:mb-prob}}\label{alg:hyb-admm}
\DontPrintSemicolon
\textbf{Initialization:} choose $x^1$ and set $\lambda^1=0$; choose $\beta,\rho,W$ and $P_i\succeq0,\,i=1,\ldots,m$.\;
\For{$k=1,2,\ldots$}{
\For{$i=1,2,\ldots,m$}{
Update $x_i$ by
\begin{align}\label{eq:update-x}
\hspace{-0.5cm}x^{k+1}_i\in & \argmin_{x_i}\left\langle\nabla_i f(\hat{x}^{k,i})-A_i^\top\big(\lambda^k-\beta(A\hat{x}^{k,i}-b)\big), x_i\right\rangle+g_i(x_i)+\frac{1}{2}\|x_i-x_i^k\|_{P_i}^2, 
\end{align}
}
Update the multipliers by 
\begin{equation}\label{eq:update-lam}
\lambda^{k+1}=\lambda^k-\rho(Ax^{k+1}-b),
\end{equation}
\If{a certain stopping condition is satisfied}{
Return $(x^{k+1},\lambda^{k+1})$
}
}
\end{algorithm}

The algorithm is derived by applying a cyclic proximal block coordinate update to the augmented Lagrangian function of \eqref{eq:mb-prob}, that is
\begin{equation}\label{eq:aug-lag}
\cL_\beta(x,\lambda)= f(x)+\sum_{i=1}^m g_i(x_i)-\langle\lambda, Ax-b\rangle+\frac{\beta}{2}\|Ax-b\|^2.
\end{equation} 
At each iteration, we first renew every $x_i$ by minimizing a proximal approximation of $\cL_\beta$,  
one primal block variable at a time while all the remaining ones are fixed, and we then update the multiplier $\lambda$ by a dual gradient ascent step. 

Note that in \eqref{eq:update-x}, for simplicity, we evaluate the partial gradients of the quadratic term $f$ and augmented term $\frac{\beta}{2}\|Ax-b\|^2$ at the same point $\hat{x}^{k,i}$. In general, two different points can be used to explore structures of $Q$ and $A$, and our convergence analysis still holds by choosing two different $W$'s appropriately. 
Practically, one can set 
$$W=\left[\begin{array}{cccc}1 & 1 & \cdots & 1\\ 1 & 1 & \cdots & 1\\ \vdots & \ddots &\ddots & \vdots\\ 1 & \cdots & 1 & 1\end{array}\right],\text{ or } W=\left[\begin{array}{cccc}1 & 1 & \cdots & 1\\ 0 & 1 & \cdots & 1\\ \vdots & \ddots &\ddots & \vdots\\ 0 & \cdots & 0 & 1\end{array}\right],$$
which give fully Jacobian and fully Gauss-Seidel updates respectively. Empirically, the latter one usually performs better. However, theoretically, the latter case may fail to converge when $m\ge 3$, as shown in \cite{chen2016direct, feng2015cvg-LADMM}. Therefore, we will design a mixing matrix $W$ between the above two choices to theoretically guarantee  convergence and also maintain practically nice performance. Our choice is inspired from the convergence analysis and will be discussed in details in section \ref{sec:choose-w}. 

We finish this section by giving examples that fully Gauss-Seidel update method may diverge even if $P_i$'s are large, which strongly motivate our hybrid strategy. Let us consider the problem
$$\min_x~ 0,\st A_1 x_1 +A_2 x_2 +A_3 x_3 = 0,$$
where $A_1=[1, 1-\vareps, 1-\vareps]^\top$, $A_2=[1, 1, 1-\vareps]^\top$ and $A_3=[1,1,1]^\top$. For any $\vareps\neq 0$, $A$ is invertiable and thus the problem has a unique solution $[0,0,0]^\top$. Applied to the above problem with fully Gauss-Seidel update, $\beta=\rho=1$, and $P_i=\frac{\max_j\|A_j\|^2}{\tau},\forall i$, Algorithm \ref{alg:hyb-admm} becomes the following iterative method (see \cite[section 3]{feng2015cvg-LADMM}):  
{\small$$\left[\begin{array}{r}x_1^{k+1}\\x_2^{k+1}\\x_3^{k+1}\\\lambda^{k+1}\end{array}\right]=
\left[\begin{array}{cccc}1&0&0&0_{1\times 3}\\\tau A_2^\top A_1 & 1& 0& 0_{1\times 3}\\\tau A_3^\top A_1 & \tau A_3^\top A_2 & 1 & 0_{1\times 3}\\ A_1 & A_2 &A_3 & I_{3\times 3}\end{array}\right]^{-1}
\left[\begin{array}{cccc}1-\tau A_1^\top A_1 & -\tau A_1^\top A_2 &-\tau A_1^\top A_3 & \tau A_1^\top\\0&1-\tau A_2^\top A_2 &-\tau A_2^\top A_3 & \tau A_2^\top\\0 &0 & 1-\tau A_3^\top A_3 & \tau A_3^\top\\ 0_{3\times 1} & 0_{3\times 1} & 0_{3\times 1} & I_{3\times 3}\end{array}\right]
\left[\begin{array}{r}x_1^{k}\\x_2^{k}\\x_3^{k}\\\lambda^{k}\end{array}\right].$$
}Denote $M_\tau$ as the iterating matrix. Then the algorithm converges if the spectral radius of $M_\tau$ is smaller than one and diverges if larger than one. For $\vareps$ varying among $\{10^{-1}, 10^{-2}, 10^{-3}, 10^{-4}\}$, we search for the largest $\tau$ with initial value $\frac{1}{3}$ and decrement $10^{-5}$ such that the spectral radius of $M_\tau$ is less than one. The results are listed in Table \ref{table:eps-tau} below. They indicate that to guarantee the convergence of the algorithm, a diminishing stepsize would be required for the $x$-update, while note that $\tau$ can be as large as $\frac{1}{3}$ for convergence if the fully Jacobian update is employed.
\begin{table}[H]
\begin{center}
\begin{tabular}{c|cccc}
\hline
$\vareps$ & $10^{-1}$ & $10^{-2}$ & $10^{-3}$ & $10^{-4}$\\
$\tau$ & $1.45473\times 10^{-1}$ & $1.34433\times 10^{-2}$ &  $1.33333\times 10^{-3}$ & $1.33333 \times 10^{-4}$\\\hline
\end{tabular}
\end{center}
\caption{Values of $\vareps$ and the corresponding largest $\tau$ such that spectral radius of $M_\tau$ less than one.}\label{table:eps-tau}
\end{table}

\section{Convergence analysis}\label{sec:analysis}
In this section, we analyze the convergence of Algorithm \ref{alg:hyb-admm}. We establish its global iterate sequence convergence and $O(1/t)$ rate by choosing an appropriate mixing matrix $W$ and assuming merely weak convexity on the problem. 

\subsection{Notation and preliminary results} 

Before proceeding with our analysis, we introduce some notation and a few preliminary lemmas.

We let $$g(x)=\sum_{i=1}^m g_i(x_i),\quad F=f+g.$$ A point $x^*$ is a solution to \eqref{eq:mb-prob} if there exists $\lambda^*$ such that the KKT conditions hold:
\begin{subequations}\label{eq:kkts}
\begin{align}
&0\in\partial F(x^*)-A^\top\lambda^*,\label{eq:kkts-d}\\
&Ax^*-b=0,\label{eq:kkts-p}
\end{align}
\end{subequations}
where $\partial F$ denotes the subdifferential of $F$.
Together with the convexity of $F$, \eqref{eq:kkts} implies
\begin{equation}\label{eq:opt-cond}
F(x)-F(x^*)-\langle\lambda^*, Ax-b\rangle\ge 0,\,\forall x.
\end{equation}
We denote $\cX^*$ as the solution set of \eqref{eq:mb-prob}. For any vector $v$ and any symmetric matrix $S$ of appropriate size, we define $\|v\|_S^2=v^\top Sv$. Note this definition does not require $S$ to be PSD, so $\|v\|_S^2$ may be negative. $I$ is reserved for the identity matrix and $E$ for the all-one matrix, whose size would be clear from the context. $A\otimes B$ represents the Kronecker product of two matrices $A$ and $B$. For any matrices $A,B,C$ and $D$ of appropriate sizes, it holds that (c.f., \cite[Chapter 4]{hom1991topics})
\begin{align}
&(A\otimes B)^\top = A^\top\otimes B^\top,\label{eq:kron-trans}\\
&(A\otimes B)(C\otimes D) = (AC)\otimes (BD).\label{eq:kron-mult}
\end{align}

The following lemmas can be found in \cite{xu2017accelerated-alm} and also appear in \cite{gao2014information, GXZ-RPDCU2016}.
\begin{lemma}
For any two vectors $\vu, \vv$ and a symmetric matrix $\vS$, we have
\begin{equation}\label{uv-cross}
2\vu^\top \vS \vv = \|\vu\|_\vS^2+\|\vv\|_\vS^2-\|\vu-\vv\|_\vS^2.
\end{equation}
\end{lemma}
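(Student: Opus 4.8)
The plan is to prove the identity by a direct expansion of the quadratic form appearing on the right-hand side. First I would write $\|\vu-\vv\|_\vS^2 = (\vu-\vv)^\top \vS (\vu-\vv)$ and distribute the product, producing the four scalar terms $\vu^\top \vS \vu$, $-\vu^\top \vS \vv$, $-\vv^\top \vS \vu$, and $\vv^\top \vS \vv$. The one substantive ingredient is the symmetry of $\vS$: since $\vS=\vS^\top$, the scalar $\vv^\top \vS \vu$ equals its own transpose $\vu^\top \vS^\top \vv = \vu^\top \vS \vv$, so the two cross terms collapse to $-2\vu^\top \vS \vv$. Hence $\|\vu-\vv\|_\vS^2 = \|\vu\|_\vS^2 + \|\vv\|_\vS^2 - 2\vu^\top \vS \vv$, and moving the cross term to the left and the squared-norm of the difference to the right gives exactly \eqref{uv-cross}.

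There is essentially no obstacle: this is the bilinear-form analogue of the elementary polarization identity $2ab = a^2+b^2-(a-b)^2$, and the only place a careful reader uses a hypothesis is the step $\vv^\top \vS \vu = \vu^\top \vS \vv$, which is precisely where symmetry of $\vS$ enters. I would also remark that positive semidefiniteness of $\vS$ is never invoked — the computation is purely algebraic — which is consistent with the convention introduced just above that $\|\cdot\|_\vS^2$ need not be nonnegative. Accordingly I would present the argument as a two-line display rather than a structured proof.
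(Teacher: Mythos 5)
Your proof is correct: expanding $(\vu-\vv)^\top \vS(\vu-\vv)$ and using symmetry of $\vS$ to merge the cross terms is exactly the standard argument, and the paper itself gives no proof (it cites \cite{xu2017accelerated-alm, gao2014information, GXZ-RPDCU2016}), so there is nothing to add. Your remark that positive semidefiniteness is never used is also consistent with the paper's convention that $\|\cdot\|_\vS^2$ may be negative.
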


\begin{lemma}\label{lem:x-rate}
Given (possibly nonconvex) functions $F$, $\phi$, and a fixed point $\bar{x}$, if for any $\lambda$, it holds that
$$F(\bar{x})-F(x^*)-\langle\lambda, A\bar{x}-b\rangle\le \phi(\lambda),$$
then for any $\gamma>0$, we have
$$F(\bar{x})-F(x^*)+\gamma \|A\bar{x}-b\|\le \sup_{\|\lambda\|\le \gamma}\phi(\lambda).
$$
\end{lemma}

\begin{lemma} \label{lem:equiv-rate}
Let $(x^*,\lambda^*)$ be any point satisfying the condition in \eqref{eq:opt-cond}.
If
$F(\bar{x})-F(x^*)+\gamma\|A\bar{x}-b\|\le \epsilon$ for certain $\epsilon\ge0$ and $\gamma >\|\lambda^*\|$,
then
$$\|A\bar{x}-b\| \leq \frac{\epsilon}{\gamma-\|\lambda^*\|} \mbox{ and }  -\frac{\|\lambda^*\|\epsilon}{\gamma-\|\lambda^*\|}\le F(\bar{x})-F(x^*) \leq \epsilon.$$
\end{lemma}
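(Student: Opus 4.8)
\textbf{Proof proposal for Lemma \ref{lem:equiv-rate}.}

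The plan is to combine the hypothesis $F(\bar x)-F(x^*)+\gamma\|A\bar x-b\|\le\epsilon$ with the one-sided bound on $F(\bar x)-F(x^*)$ that \eqref{eq:opt-cond} supplies, and then just rearrange. First I would invoke \eqref{eq:opt-cond} at the point $x=\bar x$, which gives $F(\bar x)-F(x^*)\ge\langle\lambda^*,A\bar x-b\rangle$. Applying Cauchy--Schwarz to the right-hand side yields the key lower estimate
\begin{equation*}
F(\bar x)-F(x^*)\ge -\|\lambda^*\|\,\|A\bar x-b\|.
\end{equation*}

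Next I would substitute this into the hypothesis: since $F(\bar x)-F(x^*)\ge-\|\lambda^*\|\,\|A\bar x-b\|$, we get
\begin{equation*}
\epsilon\ge F(\bar x)-F(x^*)+\gamma\|A\bar x-b\|\ge(\gamma-\|\lambda^*\|)\,\|A\bar x-b\|.
\end{equation*}
Because $\gamma>\|\lambda^*\|$ by assumption, the coefficient $\gamma-\|\lambda^*\|$ is strictly positive, so dividing through gives the feasibility bound $\|A\bar x-b\|\le\epsilon/(\gamma-\|\lambda^*\|)$.

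Finally I would read off the two-sided bound on the objective gap. For the upper bound, the hypothesis directly gives $F(\bar x)-F(x^*)\le\epsilon-\gamma\|A\bar x-b\|\le\epsilon$ since $\gamma\|A\bar x-b\|\ge0$. For the lower bound, I would feed the feasibility estimate just obtained back into the Cauchy--Schwarz inequality from the first step:
\begin{equation*}
F(\bar x)-F(x^*)\ge-\|\lambda^*\|\,\|A\bar x-b\|\ge-\frac{\|\lambda^*\|\,\epsilon}{\gamma-\|\lambda^*\|}.
\end{equation*}
This completes the argument. There is no real obstacle here: the proof is a two-line manipulation, and the only thing to be careful about is the direction of the inequality from \eqref{eq:opt-cond} and the strict positivity of $\gamma-\|\lambda^*\|$ that makes the division legitimate. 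I would only note in passing that no convexity or structure of $A$, $F$ beyond \eqref{eq:opt-cond} is used, which is why the lemma is stated for an arbitrary pair $(x^*,\lambda^*)$ satisfying that single inequality.
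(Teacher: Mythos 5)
Your proof is correct and is essentially the standard argument behind this lemma, which the paper itself does not reprove but imports from its cited references: apply \eqref{eq:opt-cond} at $x=\bar x$, use Cauchy--Schwarz to get $F(\bar x)-F(x^*)\ge-\|\lambda^*\|\,\|A\bar x-b\|$, and combine with the hypothesis and $\gamma>\|\lambda^*\|$ to extract all three bounds. Nothing is missing.
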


\subsection{Technical assumptions}

Throughout our analysis, we make the following assumptions.

\begin{assumption}\label{assump1}
In \eqref{eq:mb-prob}, $Q$ is PSD, and $g_i$'s are proper closed convex functions. There exists a pair $(x^*,\lambda^*)$ satisfying the KKT conditions in \eqref{eq:kkts}.
\end{assumption}

\begin{assumption}\label{assump2}
The solution set of the subproblem in \eqref{eq:update-x} is nonempty for any $i$.
\end{assumption}

\begin{assumption}\label{assump3}
The mixing matrix $W$ satisfy:
\begin{align}\label{eq:hat-tilde-w}
&w_{ij}=1,\,\forall j\ge i,\\
&\exists\,u\text{ such that }U=W-eu^\top\text{ is symmetric},\label{eq:sym-w}
\end{align}
where $e$ is the all-one vector.
\end{assumption}

The first assumption is minimal for finding a solution to \eqref{eq:mb-prob}. The second one is for well-definedness of the proposed algorithm, and it can be guaranteed if $P_i$'s are all positive definite. 

The requirements in \eqref{eq:hat-tilde-w} are for easy implementation of the update in \eqref{eq:update-x} because otherwise $x^{k+1}_i$ may implicitly depend on the later updated block variables. The conditions in \eqref{eq:sym-w} are for technical reason; see \eqref{eq:relax-ineq} below. They can be satisfied by first choosing $u$ and then determining the corresponding $W$; see the formula \eqref{eq:formula-W} below. How to choose $u$ will be discussed in the next section since it is inspired from our convergence analysis.

\subsection{Convergence results of Algorithm \ref{alg:hyb-admm}}
We show that with appropriate proximal terms, Algorithm \ref{alg:hyb-admm} can have $O(1/t)$ convergence rate, where $t$ is the number of iterations. The result includes several existing ones as special cases, and we will discuss it after presenting our convergence result.

We first establish a few inequalities. Since $Q$ is PSD, there exists a matrix $H$ such that $Q=H^\top H$. Corresponding to the partition of $x$, we let $H=(H_1,\ldots,H_m)$. 

\begin{proposition}\label{prop:cross-term}
Let ${W}$ satisfy the conditions in Assumption \ref{assump3} and define
$$y_i=H_ix_i,\quad z_i=A_ix_i,\,\forall i.$$ Then for any ${\alpha}>0$,
\begin{align}\label{eq:bd1-y}
&\sum_{i=1}^m\sum_{j=1}^m{w}_{ij}\big\langle H_i(x_i^{k+1}-x_i), H_j(x_j^{k+1}-x_j^k)\big\rangle\\
\le&\frac{1}{2}\left(\|y^{k+1}-y\|_{{V}}^2-\|y^k-y\|_{{V}}^2+\|y^{k+1}-y^k\|_{{V}}^2\right)+\frac{1}{2{\alpha}}\|H(x^{k+1}-x)\|^2+\frac{{\alpha}}{2}\left\|\big({u}^\top\otimes I\big)(y^{k+1}-y^k)\right\|^2,\nonumber
\end{align}
and
\begin{align}\label{eq:bd1-z}
&\sum_{i=1}^m\sum_{j=1}^m{w}_{ij}\big\langle A_i(x_i^{k+1}-x_i), A_j(x_j^{k+1}-x_j^k)\big\rangle\\
\le &\frac{1}{2}\left(\|z^{k+1}-z\|_{{V}}^2-\|z^k-z\|_{{V}}^2+\|z^{k+1}-z^k\|_{{V}}^2\right)+\frac{1}{2{\alpha}}\|A(x^{k+1}-b)\|^2+\frac{{\alpha}}{2}\left\|\big({u}^\top\otimes I\big)(z^{k+1}-z^k)\right\|^2,\nonumber
\end{align}
where ${V}={U}\otimes I$.
\end{proposition}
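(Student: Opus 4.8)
The idea is to view the double sum on the left of \eqref{eq:bd1-y} as a single Kronecker-structured quadratic form, split off the rank-one part $eu^\top$ of $W$ so as to expose a symmetric quadratic form together with a cross term involving $H(x^{k+1}-x)$, apply the polarization identity \eqref{uv-cross} to the former, and bound the latter by Young's inequality.

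First I would rewrite the left side of \eqref{eq:bd1-y} as $(y^{k+1}-y)^\top (W\otimes I)(y^{k+1}-y^k)$, where $y^{k+1}-y$ and $y^{k+1}-y^k$ denote the block-stacked vectors whose $i$-th blocks are $H_i(x_i^{k+1}-x_i)$ and $H_i(x_i^{k+1}-x_i^k)$; this is immediate from the block-multiplication rule for $W\otimes I$. By \eqref{eq:sym-w}, $W=U+eu^\top$ with $U=U^\top$, and the mixed-product property \eqref{eq:kron-mult} (with $e$ treated as an $m\times1$ matrix and $u^\top$ as a $1\times m$ matrix) gives $(eu^\top)\otimes I=(e\otimes I)(u^\top\otimes I)$. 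Together with \eqref{eq:kron-trans} this yields
\begin{align*}
(y^{k+1}-y)^\top (W\otimes I)(y^{k+1}-y^k)
&= (y^{k+1}-y)^\top V (y^{k+1}-y^k)\\
&\quad+ \big\langle (e^\top\otimes I)(y^{k+1}-y),\, (u^\top\otimes I)(y^{k+1}-y^k)\big\rangle ,
\end{align*}
and $(e^\top\otimes I)(y^{k+1}-y)=\sum_{i=1}^m H_i(x_i^{k+1}-x_i)=H(x^{k+1}-x)$ since $H=(H_1,\dots,H_m)$.

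It remains to estimate the two terms. Because $V=U\otimes I$ is symmetric, the identity \eqref{uv-cross} applied with the two arguments $y^{k+1}-y$ and $y^{k+1}-y^k$ (whose difference is $y^k-y$) turns the first term into $\tfrac12\big(\|y^{k+1}-y\|_V^2-\|y^k-y\|_V^2+\|y^{k+1}-y^k\|_V^2\big)$. For the second term, Young's inequality $\langle p,q\rangle\le\frac{1}{2\alpha}\|p\|^2+\frac{\alpha}{2}\|q\|^2$ with $p=H(x^{k+1}-x)$ and $q=(u^\top\otimes I)(y^{k+1}-y^k)$ produces exactly the last two summands on the right of \eqref{eq:bd1-y}. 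Adding the two estimates gives \eqref{eq:bd1-y}; inequality \eqref{eq:bd1-z} follows by the verbatim argument with each $H_i$ replaced by $A_i$, noting that then $\sum_i A_i(x_i^{k+1}-x_i)=Ax^{k+1}-Ax$, which is the residual displayed on the right of \eqref{eq:bd1-z} at a feasible $x$.

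I do not expect a genuine obstacle: once $W\otimes I=V+(e\otimes I)(u^\top\otimes I)$ is in place, the rest is bookkeeping. The one point needing care is orienting the rank-one correction correctly as $eu^\top$ (constant columns), so that after the split the weights $u_i$ multiply the increments $y_i^{k+1}-y_i^k$ while the all-ones vector collapses $y^{k+1}-y$ to $H(x^{k+1}-x)$; the transposed choice $ue^\top$ would not match the claimed right-hand side. It is also worth remarking that only the symmetry requirement \eqref{eq:sym-w} of Assumption \ref{assump3} enters this proposition, not the triangular condition \eqref{eq:hat-tilde-w}.
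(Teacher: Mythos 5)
Your proposal is correct and follows essentially the same route as the paper's proof: rewrite the double sum as $(y^{k+1}-y)^\top(W\otimes I)(y^{k+1}-y^k)$, split $W=U+eu^\top$ using \eqref{eq:sym-w} together with the Kronecker identities \eqref{eq:kron-trans}--\eqref{eq:kron-mult}, apply \eqref{uv-cross} to the symmetric part, and bound the rank-one cross term by Young's inequality (which the paper labels Cauchy--Schwartz). Your closing remarks—that only \eqref{eq:sym-w} is needed here and that the $A$-version reads off the feasibility residual—are accurate and consistent with the paper's argument.
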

\begin{proof}
We only show \eqref{eq:bd1-y}, and \eqref{eq:bd1-z} follows in the same way. By the definition of $y$, we have
\begin{align}
&\sum_{i=1}^m\sum_{j=1}^m{w}_{ij}\big\langle H_i(x_i^{k+1}-x_i), H_j(x_j^{k+1}-x_j^k)\big\rangle\cr
=&(y^{k+1}-y)^\top\big({W}\otimes I\big)(y^{k+1}-y^k)\cr
\overset{\eqref{eq:sym-w}}=&(y^{k+1}-y)^\top\big({U}\otimes I\big)(y^{k+1}-y^k)+(y^{k+1}-y)^\top\left((e{u}^\top)\otimes I\right)(y^{k+1}-y^k)\cr
\overset{\eqref{eq:kron-mult}}=&(y^{k+1}-y)^\top\big({U}\otimes I\big)(y^{k+1}-y^k)+(y^{k+1}-y)^\top\left((e\otimes I)\big({u}^\top\otimes I\big)\right)(y^{k+1}-y^k)\cr
=&(y^{k+1}-y)^\top\big({U}\otimes I\big)(y^{k+1}-y^k)+\big(H(x^{k+1}-x)\big)^\top\left({u}^\top\otimes I\right)(y^{k+1}-y^k)\label{eq:relax-ineq}\\
\le &(y^{k+1}-y)^\top\big({U}\otimes I\big)(y^{k+1}-y^k)+\frac{1}{2{\alpha}}\|H(x^{k+1}-x)\|^2+\frac{{\alpha}}{2}\left\|\big({u}^\top\otimes I\big)(y^{k+1}-y^k)\right\|^2\cr
\overset{\eqref{uv-cross}}=&\frac{1}{2}\left(\|y^{k+1}-y\|_{{V}}^2-\|y^k-y\|_{{V}}^2+\|y^{k+1}-y^k\|_{{V}}^2\right)+\frac{1}{2{\alpha}}\|H(x^{k+1}-x)\|^2+\frac{{\alpha}}{2}\left\|\big({u}^\top\otimes I\big)(y^{k+1}-y^k)\right\|^2,\nonumber
\end{align}
where the inequality follows from the Cauchy-Schwartz inequality.
\end{proof}

\begin{proposition}[One-iteration result]\label{prop:one-iter}
Let $\{(x^k,\lambda^k)\}_{k\ge 1}$ be the sequence generated from Algorithm \ref{alg:hyb-admm}. Under Assumptions \ref{assump2} and \ref{assump3}, for any $(x,\lambda)$ such that $Ax=b$, it holds that
\begin{align}\label{eq:1iter}
&F(x^{k+1})-F(x)-\langle\lambda, Ax^{k+1}-b\rangle+\frac{1}{2\rho}\left(\|\lambda^{k+1}-\lambda\|^2-\|\lambda^k-\lambda\|^2\right)+\frac{{\alpha}-1}{2{\alpha}}\|H(x^{k+1}-x)\|^2\cr
&+\left(\frac{\beta-\rho}{2}+\frac{\beta({\alpha}-1)}{2{\alpha}}\right)\|Ax^{k+1}-b\|^2+\frac{1}{2}\big( \|x^{k+1}-x\|_P^2-\|x^k-x\|_P^2+\|x^{k+1}-x^k\|_P^2 \big)\cr
\le&\frac{1}{2}\left(\|y^{k+1}-y\|_{{V}}^2-\|y^k-y\|_{{V}}^2+\|y^{k+1}-y^k\|_{{V}}^2\right)+\frac{{\alpha}}{2}\left\|\big({u}^\top\otimes I\big)(y^{k+1}-y^k)\right\|^2\cr
&+\frac{\beta}{2}\left(\|z^{k+1}-z\|_{{V}}^2-\|z^k-z\|_{{V}}^2+\|z^{k+1}-z^k\|_{{V}}^2\right)+\frac{\beta{\alpha}}{2}\left\|\big({u}^\top\otimes I\big)(z^{k+1}-z^k)\right\|^2,
\end{align}
where $$P=\mathrm{blkdiag}(P_1,\ldots,P_m).$$
\end{proposition}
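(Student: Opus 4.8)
\textbf{Proof plan for Proposition \ref{prop:one-iter}.} The plan is to start from the optimality condition of the subproblem \eqref{eq:update-x} for each block $i$, sum over $i=1,\ldots,m$, and then massage the cross terms into the telescoping quantities that appear on both sides of \eqref{eq:1iter}. Concretely, since $x_i^{k+1}$ minimizes the strongly-convex-in-$P_i$ subproblem, for any $x_i$ there is a subgradient $\xi_i\in\partial g_i(x_i^{k+1})$ with
\begin{align*}
\big\langle \nabla_i f(\hat x^{k,i})-A_i^\top(\lambda^k-\beta(A\hat x^{k,i}-b))+\xi_i+P_i(x_i^{k+1}-x_i^k),\, x_i^{k+1}-x_i\big\rangle\le 0.
\end{align*}
First I would use convexity of $g_i$ to pass from $\langle\xi_i,x_i^{k+1}-x_i\rangle$ to $g_i(x_i^{k+1})-g_i(x_i)$, and convexity of $f$ (a quadratic, so with exact second-order remainder) together with $Q=H^\top H$ to rewrite $\langle\nabla_i f(\hat x^{k,i}),x_i^{k+1}-x_i\rangle$ in terms of $f(x^{k+1})-f(x)$ plus correction terms involving $H_i$ evaluated at the mismatch $\hat x^{k,i}$ versus $x^{k+1}$. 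The key algebraic identity to keep in mind is $\nabla_i f(\hat x^{k,i})=H_i^\top H\hat x^{k,i}$ and, because $\hat x_j^{k,i}=x_j^{k+1}-w_{ij}(x_j^{k+1}-x_j^k)$, we get $H\hat x^{k,i}=Hx^{k+1}-\sum_j w_{ij}H_j(x_j^{k+1}-x_j^k)$, so the gradient mismatch is exactly the quantity $\sum_j w_{ij}\langle H_i(x_i^{k+1}-x_i),H_j(x_j^{k+1}-x_j^k)\rangle$ once summed over $i$ — precisely the left-hand side of \eqref{eq:bd1-y}. The analogous treatment of the augmented term $-A_i^\top(\lambda^k-\beta(A\hat x^{k,i}-b))$ produces the sum matched by \eqref{eq:bd1-z}, plus the plain $\langle\lambda^k,Ax^{k+1}-b\rangle$ and $\beta\|Ax^{k+1}-b\|^2$-type terms; here I would substitute $A\hat x^{k,i}=Ax^{k+1}-\sum_j w_{ij}A_j(x_j^{k+1}-x_j^k)$ and use $Ax=b$.

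The second block of steps is to handle the multiplier. Using the dual update \eqref{eq:update-lam}, I would replace $\lambda^k$ by $\lambda^{k+1}+\rho(Ax^{k+1}-b)$ wherever it appears, turning $\langle\lambda^k,Ax^{k+1}-b\rangle$ into $\langle\lambda^{k+1},Ax^{k+1}-b\rangle+\rho\|Ax^{k+1}-b\|^2$, and then write $\langle\lambda^{k+1}-\lambda,Ax^{k+1}-b\rangle=-\frac1\rho\langle\lambda^{k+1}-\lambda,\lambda^{k+1}-\lambda^k\rangle$ and apply the polarization identity \eqref{uv-cross} to get $\frac{1}{2\rho}(\|\lambda^{k+1}-\lambda\|^2-\|\lambda^k-\lambda\|^2+\|\lambda^{k+1}-\lambda^k\|^2)$. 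The term $\frac{1}{2\rho}\|\lambda^{k+1}-\lambda^k\|^2=\frac\rho2\|Ax^{k+1}-b\|^2$ then combines with the other $\|Ax^{k+1}-b\|^2$ contributions; bookkeeping the coefficients ($\rho$ from the substitution, $-\rho/2$ from this term, $\beta$ from the augmented gradient, and the $\frac{\beta}{2\alpha}\|A(x^{k+1}-b)\|^2$ term coming out of \eqref{eq:bd1-z} — note the paper writes $\|A(x^{k+1}-b)\|$, i.e. $\|Ax^{k+1}-b\|$ since $Ax=b$) should assemble into the coefficient $\frac{\beta-\rho}{2}+\frac{\beta(\alpha-1)}{2\alpha}$ in \eqref{eq:1iter}. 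Likewise the $\frac{1}{2\alpha}\|H(x^{k+1}-x)\|^2$ term from \eqref{eq:bd1-y}, combined with an $f$-remainder term of the form $-\frac12\|H(x^{k+1}-x)\|^2$ (the exact Bregman/second-order remainder of the quadratic $f$), yields $\frac{\alpha-1}{2\alpha}\|H(x^{k+1}-x)\|^2$.

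For the proximal terms, I would apply \eqref{uv-cross} with $S=P_i$ to $\langle P_i(x_i^{k+1}-x_i^k),x_i^{k+1}-x_i\rangle$, giving $\frac12(\|x_i^{k+1}-x_i\|_{P_i}^2-\|x_i^k-x_i\|_{P_i}^2+\|x_i^{k+1}-x_i^k\|_{P_i}^2)$, and sum over $i$ using the block-diagonal $P=\mathrm{blkdiag}(P_1,\ldots,P_m)$. Finally I would invoke Proposition \ref{prop:cross-term} directly to bound the two double-sum cross terms (the $H$-sum by \eqref{eq:bd1-y}, the $A$-sum by \eqref{eq:bd1-z}), which is what moves the telescoping $V$-norm quantities and the $\|(u^\top\otimes I)(\cdot)\|^2$ residuals onto the right-hand side of \eqref{eq:1iter}. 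Collecting everything and rearranging gives the claimed inequality.

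\textbf{Main obstacle.} The genuine work is not any single step but the coefficient bookkeeping: tracking exactly how the $\|Ax^{k+1}-b\|^2$ contributions from the dual substitution, the augmented-Lagrangian gradient, and the $\frac1\alpha$-split in Proposition \ref{prop:cross-term} combine, and similarly how the exact second-order remainder of the quadratic $f$ interacts with the $\frac1\alpha\|H(x^{k+1}-x)\|^2$ term. The subtle point is that because $f$ is quadratic its first-order expansion has an \emph{exact} (not merely signed) remainder $\tfrac12\|H(x^{k+1}-x)\|^2$, and one must use this equality — not just convexity — to produce the precise coefficient $\frac{\alpha-1}{2\alpha}$ rather than a weaker bound; the same care is needed so that evaluating $\nabla_i f$ and the augmented gradient at the \emph{shifted} point $\hat x^{k,i}$ rather than at $x^{k+1}$ contributes only the cross terms already accounted for in Proposition \ref{prop:cross-term} and nothing extra. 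Getting all signs and the free parameter $\alpha>0$ to line up is the crux.
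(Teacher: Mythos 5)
Your plan follows essentially the same route as the paper's proof: write the optimality condition of \eqref{eq:update-x} with the shifted point $\hat x^{k,i}$ expanded so the mismatch becomes exactly the double sums of Proposition \ref{prop:cross-term}, take inner products with $x_i^{k+1}-x_i$ and sum over $i$, use convexity of $g$, the exact quadratic expansion of $f$ (equivalently \eqref{uv-cross} applied to $\langle H(x^{k+1}-x),Hx^{k+1}\rangle$), the dual update \eqref{eq:update-lam} together with $Ax=b$ and \eqref{uv-cross} for the $\lambda$- and $P$-terms, and finally invoke \eqref{eq:bd1-y}--\eqref{eq:bd1-z}. The coefficient assembly you describe ($\tfrac12-\tfrac1{2\alpha}$ for the $H$-term and $\tfrac{\beta-\rho}{2}+\tfrac{\beta(\alpha-1)}{2\alpha}$ for the feasibility term) matches the paper's bookkeeping, so the proposal is correct and not materially different.
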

\begin{proof}
From the update \eqref{eq:update-x}, we have the optimality conditions: for $i=1,\ldots,m$,
\begin{align}\label{eq:1step-opt}
0=& \nabla_i f(\hat{x}^{k,i})-A_i^\top\lambda^k+\tilde{\nabla}g_i(x^{k+1}_i)+\beta A_i^\top\big(A\hat{x}^{k,i}-b\big)+P_i(x_i^{k+1}-x_i^k)\cr
=&H_i^\top\left(H x^{k+1}-\sum_{j=1}^m{w}_{ij}H_j(x_j^{k+1}-x_j^k)\right)-A_i^\top\lambda^k+\tilde{\nabla}g_i(x^{k+1}_i)\cr
&+\beta A_i^\top\left((Ax^{k+1}-b)-\sum_{j=1}^m{w}_{ij}A_j(x_j^{k+1}-x_j^k)\right)+P_i(x_i^{k+1}-x_i^k),
\end{align}
where $\tilde{\nabla}g_i(x^{k+1}_i)$ is a subgradient of $g_i$ at $x^{k+1}_i$.
Doing inner product of both sides of \eqref{eq:1step-opt} with $x_i^{k+1}-x_i$, and summing them together over $i$, we have
\begin{align}\label{eq:bd1}
&\sum_{i=1}^m\sum_{j=1}^m{w}_{ij}\big\langle H_i(x_i^{k+1}-x_i), H_j(x_j^{k+1}-x_j^k)\big\rangle+\beta\sum_{i=1}^m\sum_{j=1}^m{w}_{ij}\big\langle A_i(x_i^{k+1}-x_i), A_j(x_j^{k+1}-x_j^k)\big\rangle\cr
=&\big\langle H(x^{k+1}-x), Hx^{k+1}\big\rangle+\big\langle A(x^{k+1}-x), -\lambda^k+\beta(Ax^{k+1}-b)\big\rangle+\big\langle x^{k+1}-x,\tilde{\nabla} g(x^{k+1})\big\rangle\cr
&+\big\langle x^{k+1}-x, P(x^{k+1}-x^k)\big\rangle\cr
\ge &\big\langle H(x^{k+1}-x), Hx^{k+1}\big\rangle+\big\langle A(x^{k+1}-x), -\lambda^k+\beta(Ax^{k+1}-b)\big\rangle+g(x^{k+1})-g(x)\cr
&+\big\langle x^{k+1}-x, P(x^{k+1}-x^k)\big\rangle\cr
= & \frac{1}{2}\big(\|H(x^{k+1}-x)\|^2-\|Hx\|^2+\|Hx^{k+1}\|^2\big)-\langle\lambda^{k+1}, Ax^{k+1}-b\rangle\cr
&+(\beta-\rho)\|Ax^{k+1}-b\|^2+g(x^{k+1})-g(x)+\frac{1}{2}\big( \|x^{k+1}-x\|_P^2-\|x^k-x\|_P^2+\|x^{k+1}-x^k\|_P^2 \big)\cr
=&\frac{1}{2}\|H(x^{k+1}-x)\|^2 +F(x^{k+1})-F(x)-\langle\lambda^{k+1}, Ax^{k+1}-b\rangle\cr
&+(\beta-\rho)\|Ax^{k+1}-b\|^2+\frac{1}{2}\big( \|x^{k+1}-x\|_P^2-\|x^k-x\|_P^2+\|x^{k+1}-x^k\|_P^2 \big),
\end{align}
where the inequality uses the convexity of $g$, and in the second equality, we have used \eqref{uv-cross}, the update rule \eqref{eq:update-lam}, and the condition $Ax=b$.

Substituting \eqref{eq:bd1-y} and \eqref{eq:bd1-z} into \eqref{eq:bd1}, we have
\begin{align}\label{eq:bd2}
&F(x^{k+1})-F(x)-\langle\lambda^{k+1}, Ax^{k+1}-b\rangle+\frac{{\alpha}-1}{2{\alpha}}\|H(x^{k+1}-x)\|^2\cr
&+\left(\frac{\beta}{2}+\frac{\beta({\alpha}-1)}{2{\alpha}}-\rho\right)\|Ax^{k+1}-b\|^2+\frac{1}{2}\big( \|x^{k+1}-x\|_P^2-\|x^k-x\|_P^2+\|x^{k+1}-x^k\|_P^2 \big)\cr
\le&\frac{1}{2}\left(\|y^{k+1}-y\|_{{V}}^2-\|y^k-y\|_{{V}}^2+\|y^{k+1}-y^k\|_{{V}}^2\right)+\frac{{\alpha}}{2}\left\|\big({u}^\top\otimes I\big)(y^{k+1}-y^k)\right\|^2\cr
&+\frac{\beta}{2}\left(\|z^{k+1}-z\|_{{V}}^2-\|z^k-z\|_{{V}}^2+\|z^{k+1}-z^k\|_{{V}}^2\right)+\frac{\beta{\alpha}}{2}\left\|\big({u}^\top\otimes I\big)(z^{k+1}-z^k)\right\|^2.
\end{align}
From the update \eqref{eq:update-lam}, we have
\begin{align}\label{eq:eq-lam}
0=&\langle\lambda^{k+1}-\lambda, Ax^{k+1}-b\rangle+\frac{1}{\rho}\langle\lambda^{k+1}-\lambda,\lambda^{k+1}-\lambda^k\rangle\cr
\overset{\eqref{uv-cross}}=&\langle\lambda^{k+1}-\lambda, Ax^{k+1}-b\rangle+\frac{1}{2\rho}\left(\|\lambda^{k+1}-\lambda\|^2-\|\lambda^k-\lambda\|^2+\|\lambda^{k+1}-\lambda^k\|^2\right)\cr
=&\langle\lambda^{k+1}-\lambda, Ax^{k+1}-b\rangle+\frac{1}{2\rho}\left(\|\lambda^{k+1}-\lambda\|^2-\|\lambda^k-\lambda\|^2\right)+\frac{\rho}{2}\|Ax^{k+1}-b\|^2.
\end{align}
Summing \eqref{eq:bd2} and \eqref{eq:eq-lam} together gives the desired result.
\end{proof}

Now we are ready to present our main result.
\begin{theorem}\label{thm:sub-rate}
Under Assumptions \ref{assump1} through \ref{assump3}, let $\{(x^k,\lambda^k)\}_{k\ge 1}$ be the sequence generated from Algorithm \ref{alg:hyb-admm} with parameters:
\begin{align}
&\beta\ge \rho>0,\label{eq:beta-rho}\\
&P-D_H^\top \big(({W}-e{u}^\top+{\alpha}{u}{u}^\top)\otimes I\big) D_H -\beta D_A^\top \big(({W}-e{u}^\top+{\alpha}{u}{u}^\top)\otimes I\big) D_A:=\hat{P}\succeq0.\label{eq:cond-P}
\end{align}
where ${\alpha}\ge 1$, and
$$D_H = \mathrm{blkdiag}(H_1,\ldots,H_m),\quad D_A = \mathrm{blkdiag}(A_1,\ldots,A_m).$$
Let $\bar{x}^{t+1}=\sum_{k=1}^t\frac{x^{k+1}}{t}$. Then
\begin{subequations}\label{eq:rate-obj-feas}
\begin{align}
&\big|F(\bar{x}^{t+1})-F(x^*)\big|\le\frac{1}{2t}\left(\frac{\max\{(1+\|\lambda^*\|)^2, 4\|\lambda^*\|^2\}}{\rho}+\|x^1-x^*\|_P^2-\|y^1-y^*\|_{{V}}^2-\beta\|z^1-z^*\|_{{V}}^2\right),\\
&\|A\bar{x}^{t+1}-b\|\le\frac{1}{2t}\left(\frac{\max\{(1+\|\lambda^*\|)^2, 4\|\lambda^*\|^2\}}{\rho}+\|x^1-x^*\|_P^2-\|y^1-y^*\|_{{V}}^2-\beta\|z^1-z^*\|_{{V}}^2\right),
\end{align}
\end{subequations}
where ${V}$ is defined in Proposition \ref{prop:cross-term}, and $(x^*,\lambda^*)$ is any point satisfying the KKT conditions in \eqref{eq:kkts}.

In addition, if ${\alpha}> 1$ and $\hat{P}\succ 0$, then $(x^k,\lambda^k)$ converges to a point $(x^\infty,\lambda^\infty)$ that satisfies the KKT conditions in \eqref{eq:kkts}.
\end{theorem}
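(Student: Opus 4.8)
\emph{Proof plan.} The plan is to turn the one-iteration estimate \eqref{eq:1iter} of Proposition~\ref{prop:one-iter} into a telescoping inequality for a single potential, read the $O(1/t)$ bounds off it via Lemmas~\ref{lem:x-rate}--\ref{lem:equiv-rate}, and then upgrade the rate to full-sequence convergence under the stronger hypotheses.

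\emph{Step 1: collapse the increments and build the potential.} In \eqref{eq:1iter} the differences $y^{k+1}-y^k$ and $z^{k+1}-z^k$ enter only through $\|\cdot\|_V^2$ and $\|(u^\top\otimes I)\,\cdot\|^2$. Using $V=U\otimes I$ with $U=W-eu^\top$, the identity $(u\otimes I)(u^\top\otimes I)=(uu^\top)\otimes I$ (a case of \eqref{eq:kron-mult}), and $y^{k+1}-y^k=D_H(x^{k+1}-x^k)$, $z^{k+1}-z^k=D_A(x^{k+1}-x^k)$, the combination $\tfrac12\|x^{k+1}-x^k\|_P^2-\big(\tfrac12\|y^{k+1}-y^k\|_V^2+\tfrac{\alpha}{2}\|(u^\top\otimes I)(y^{k+1}-y^k)\|^2\big)-\beta\big(\cdots\big)$ of the increment terms reduces to exactly $\tfrac12\|x^{k+1}-x^k\|_{\hat P}^2\ge0$ by the definition \eqref{eq:cond-P}. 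Likewise, with $y_i=H_ix_i$, $z_i=A_ix_i$, one has $\|x-x'\|_P^2-\|y-y'\|_V^2-\beta\|z-z'\|_V^2=\|x-x'\|_{\tilde P}^2$ where $\tilde P:=P-D_H^\top(U\otimes I)D_H-\beta D_A^\top(U\otimes I)D_A$. Crucially, $\tilde P=\hat P+\alpha D_H^\top((uu^\top)\otimes I)D_H+\alpha\beta D_A^\top((uu^\top)\otimes I)D_A\succeq\hat P\succeq0$ since the two correction matrices are PSD and $\alpha,\beta>0$. Setting $\Psi_k(x,\lambda):=\tfrac1{2\rho}\|\lambda^k-\lambda\|^2+\tfrac12\|x^k-x\|_{\tilde P}^2\ (\ge0)$, \eqref{eq:1iter} rearranges, for every $(x,\lambda)$ with $Ax=b$, into
\[
\big[F(x^{k+1})-F(x)-\langle\lambda,Ax^{k+1}-b\rangle\big]+\tfrac{\alpha-1}{2\alpha}\|H(x^{k+1}-x)\|^2+\big(\tfrac{\beta-\rho}{2}+\tfrac{\beta(\alpha-1)}{2\alpha}\big)\|Ax^{k+1}-b\|^2+\tfrac12\|x^{k+1}-x^k\|_{\hat P}^2\le\Psi_k(x,\lambda)-\Psi_{k+1}(x,\lambda),
\]
and by \eqref{eq:beta-rho} together with $\alpha\ge1$ the three squared terms on the left are nonnegative.

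\emph{Step 2: ergodic rate.} Put $x=x^*$ (so $Ax^*=b$), keep $\lambda$ free, drop the three nonnegative terms, sum over $k=1,\dots,t$, and use $\Psi_{t+1}\ge0$ and $\lambda^1=0$; convexity of $F$ with linearity of the coupling term gives $F(\bar x^{t+1})-F(x^*)-\langle\lambda,A\bar x^{t+1}-b\rangle\le\tfrac1t\Psi_1(x^*,\lambda)=\tfrac1{2t}\big(\tfrac{\|\lambda\|^2}{\rho}+\|x^1-x^*\|_{\tilde P}^2\big)=:\phi(\lambda)$, and note $\|x^1-x^*\|_{\tilde P}^2=\|x^1-x^*\|_P^2-\|y^1-y^*\|_V^2-\beta\|z^1-z^*\|_V^2\ge0$. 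Since $\phi$ is nondecreasing in $\|\lambda\|$, Lemma~\ref{lem:x-rate} yields $F(\bar x^{t+1})-F(x^*)+\gamma\|A\bar x^{t+1}-b\|\le\tfrac1{2t}\big(\tfrac{\gamma^2}{\rho}+\|x^1-x^*\|_{\tilde P}^2\big)$ for every $\gamma>0$. Feeding this into Lemma~\ref{lem:equiv-rate} with $\gamma=1+\|\lambda^*\|$ gives the feasibility bound and the upper bound on $F(\bar x^{t+1})-F(x^*)$ in \eqref{eq:rate-obj-feas}; taking $\gamma=2\|\lambda^*\|$ (or $\gamma=1$ when $\lambda^*=0$) gives the matching lower bound. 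The two choices of $\gamma$ are subsumed in $\max\{(1+\|\lambda^*\|)^2,4\|\lambda^*\|^2\}$.

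\emph{Step 3: convergence when $\alpha>1$ and $\hat P\succ0$.} Take $(x,\lambda)=(x^*,\lambda^*)$ in the inequality of Step~1; by \eqref{eq:opt-cond} the bracketed term is $\ge0$, so $\Psi_k(x^*,\lambda^*)$ is nonincreasing, hence convergent, and it dominates $\tfrac1{2\rho}\|\lambda^k-\lambda^*\|^2+\tfrac12\|x^k-x^*\|_{\hat P}^2$; with $\hat P\succ0$ this makes $\{(x^k,\lambda^k)\}$ bounded. Summing over $k$ shows $\|x^{k+1}-x^k\|_{\hat P}\to0$ (so $x^{k+1}-x^k\to0$), $\|Ax^{k+1}-b\|\to0$ and $\|H(x^{k+1}-x^*)\|\to0$ (using $\alpha>1$), and hence $\lambda^{k+1}-\lambda^k=-\rho(Ax^{k+1}-b)\to0$ by \eqref{eq:update-lam}. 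For any subsequence with $x^{k_j}\to x^\infty$, $\lambda^{k_j}\to\lambda^\infty$ we also get $x^{k_j+1}\to x^\infty$ and $\hat x^{k_j,i}\to x^\infty$ for all $i$; passing to the limit in the per-block optimality conditions \eqref{eq:1step-opt}, using continuity of $\nabla f$ and of $A(\cdot)-b$ and closedness of the graphs of the $\partial g_i$, yields $0\in\partial F(x^\infty)-A^\top\lambda^\infty$, while $Ax^\infty-b=0$ follows from $\|Ax^{k_j}-b\|\to0$; thus $(x^\infty,\lambda^\infty)$ satisfies \eqref{eq:kkts}. Finally, replaying Step~1 with $(x^\infty,\lambda^\infty)$ in place of $(x^*,\lambda^*)$, $\Psi_k(x^\infty,\lambda^\infty)$ is nonincreasing and admits the subsequence $\Psi_{k_j}(x^\infty,\lambda^\infty)\to0$, so $\Psi_k(x^\infty,\lambda^\infty)\to0$; since $\tilde P\succeq\hat P\succ0$ this forces $x^k\to x^\infty$ and $\lambda^k\to\lambda^\infty$.

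\emph{Main obstacle.} The one genuinely delicate step is Step~1: regrouping all the $y$- and $z$-increments together with $\tfrac12\|x^{k+1}-x^k\|_P^2$ through the Kronecker identities so the residual is precisely $\hat P$, and recognizing that the a priori indefinite potential $\Psi_k$ is in fact nonnegative because the $\alpha uu^\top$ term in \eqref{eq:cond-P} exactly compensates the gap between $\tilde P$ and $\hat P$. Once that is in place, the rate is telescoping-plus-Jensen and the convergence is the standard "bounded, asymptotically regular, subsequential KKT limit, then monotone potential closes the loop" scheme.
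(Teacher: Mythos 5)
Your proposal is correct and follows essentially the same route as the paper: sum the one-iteration bound of Proposition~\ref{prop:one-iter}, absorb the $y$- and $z$-increments into $\hat P$ via \eqref{eq:cond-P}, apply convexity together with Lemmas~\ref{lem:x-rate}--\ref{lem:equiv-rate} for the $O(1/t)$ bounds, and use boundedness, asymptotic regularity, a subsequential KKT limit, and the nonincreasing potential to get whole-sequence convergence. The only differences are organizational (you telescope a single potential $\Psi_k$ and make explicit the fact $\tilde P\succeq\hat P\succeq0$, which the paper uses implicitly when dropping the $\|x^{t+1}-x\|_P^2$, $\|y^{t+1}-y\|_V^2$, $\|z^{t+1}-z\|_V^2$ terms, and you invoke Lemma~\ref{lem:equiv-rate} with two choices of $\gamma$ instead of the single $\gamma=\max\{1+\|\lambda^*\|,2\|\lambda^*\|\}$), and they do not change the substance of the argument.
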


\begin{proof}
Summing the inequality \eqref{eq:1iter} from $k=1$ through $t$ and noting $\beta\ge\rho$, we have
\begin{align}\label{eq:bd3}
&\sum_{k=1}^t\big[F(x^{k+1})-F(x)-\langle\lambda, Ax^{k+1}-b\rangle\big]+\frac{1}{2\rho}\|\lambda^{t+1}-\lambda\|^2+\frac{1}{2}\|x^{t+1}-x\|_P^2\cr
&+\sum_{k=1}^t\left(\frac{1}{2}\|x^{k+1}-x^k\|_P^2 +\frac{{\alpha}-1}{2{\alpha}}\|H(x^{k+1}-x)\|^2+\frac{\beta({\alpha}-1)}{2{\alpha}}\|Ax^{k+1}-b\|^2\right)\cr
\le&\frac{1}{2\rho}\|\lambda^1-\lambda\|^2+\frac{1}{2}\|x^1-x\|_P^2-\frac{1}{2}\|y^1-y\|_{{V}}^2-\frac{\beta}{2}\|z^1-z\|_{{V}}^2\\
&+\frac{1}{2}\|y^{t+1}-y\|_{{V}}^2+\frac{1}{2}\sum_{k=1}^t\|y^{k+1}-y^k\|_{{V}}^2+\frac{{\alpha}}{2}\sum_{k=1}^t\left\|\big({u}^\top\otimes I\big)(y^{k+1}-y^k)\right\|^2\cr
&+\frac{\beta}{2}\|z^{t+1}-z\|_{{V}}^2+\frac{\beta}{2}\sum_{k=1}^t\|z^{k+1}-z^k\|_{{V}}^2+\frac{\beta{\alpha}}{2}\sum_{k=1}^t\left\|\big({u}^\top\otimes I\big)(z^{k+1}-z^k)\right\|^2.\nonumber
\end{align}

Note that
\begin{align*}
&\|y^{k+1}-y^k\|_{{V}}^2+{\alpha}\left\|\big({u}^\top\otimes I\big)(y^{k+1}-y^k)\right\|^2\cr
=&(y^{k+1}-y^k)^\top ({V}+{\alpha}{u}{u}^\top\otimes I)(y^{k+1}-y^k)\cr
=&(x^{k+1}-x^k)^\top D_H^\top \big(({W}-e{u}^\top+{\alpha}{u}{u}^\top)\otimes I\big) D_H(x^{k+1}-x^k),
\end{align*}
and similarly
\begin{align*}
&\|z^{k+1}-z^k\|_{{V}}^2+{\alpha}\left\|\big({u}^\top\otimes I\big)(z^{k+1}-z^k)\right\|^2\cr
=&(x^{k+1}-x^k)^\top D_A^\top \big(({W}-e{u}^\top+{\alpha}{u}{u}^\top)\otimes I\big) D_A(x^{k+1}-x^k).
\end{align*}
Hence, by the choice of $P$ in \eqref{eq:cond-P},  we have from \eqref{eq:bd3} that
\begin{align}\label{eq:bd3-sum}
&\sum_{k=1}^t\big[F(x^{k+1})-F(x)-\langle\lambda, Ax^{k+1}-b\rangle\big]+\frac{1}{2\rho}\|\lambda^{t+1}-\lambda\|^2\cr
&+\sum_{k=1}^t\left(\frac{1}{2}\|x^{k+1}-x^k\|_{\hat{P}}^2+\frac{{\alpha}-1}{2{\alpha}}\|H(x^{k+1}-x)\|^2+\frac{\beta({\alpha}-1)}{2{\alpha}}\|Ax^{k+1}-b\|^2\right)\cr
\le&\frac{1}{2\rho}\|\lambda^1-\lambda\|^2+\frac{1}{2}\|x^1-x\|_P^2-\frac{1}{2}\|y^1-y\|_{{V}}^2-\frac{\beta}{2}\|z^1-z\|_{{V}}^2.
\end{align}

Since ${\alpha}\ge 1$ and $\hat{P}\succeq 0$, it follows from the above inequality and the convexity of $F$ that
\begin{align}\label{eq:bd4}
&F(\bar{x}^{t+1})-F(x^*)-\langle\lambda, A\bar{x}^{t+1}-b\rangle\\
\le&\frac{1}{2t}\left(\frac{1}{\rho}\|\lambda^1-\lambda\|^2+\|x^1-x^*\|_P^2-\|y^1-y^*\|_{{V}}^2-\beta\|z^1-z^*\|_{{V}}^2\right).\nonumber
\end{align}
Since $\lambda^1=0$, we use Lemmas \ref{lem:x-rate} and \ref{lem:equiv-rate} with $\gamma=\max\{1+\|\lambda^*\|, 2\|\lambda^*\|\}$ to have \eqref{eq:rate-obj-feas}.

If ${\alpha}>1$ and $\hat{P}\succ0$, then letting $x=x^*, \lambda=\lambda^*$ in  \eqref{eq:bd3-sum} and also using \eqref{eq:opt-cond}, we have 
\begin{equation}\label{eq:diff-lim}
\lim_{k\to\infty}(x^{k+1}-x^k)=0,\quad \lim_{k\to\infty}(\lambda^{k+1}-\lambda^k)=-\rho\lim_{k\to\infty}(Ax^{k+1}-b)=0,
\end{equation}
and thus
\begin{equation}\label{eq:lim-hat-tilde}
\lim_{k\to\infty}(x^k-\hat{x}^{k,i})=0,\,\forall i.
\end{equation}
On the other hand, letting $(x,\lambda)=(x^*,\lambda^*)$ in \eqref{eq:1iter}, using \eqref{eq:opt-cond}, and noting ${\alpha},{\alpha}\ge1$, we have
\begin{align}\label{eq:1iter-bd}
&\frac{1}{2\rho}\|\lambda^{k+1}-\lambda^*\|^2+\frac{1}{2}\left(\|x^{k+1}-x^*\|_P^2-\|y^{k+1}-y^*\|_{{V}}^2-\beta\|z^{k+1}-z^*\|_{{V}}^2\right)\cr
\le&\frac{1}{2\rho}\|\lambda^k-\lambda^*\|^2+\frac{1}{2}\left(\|x^k-x^*\|_P^2-\|y^k-y^*\|_{{V}}^2-\beta\|z^k-z^*\|_{{V}}^2\right),
\end{align}
which together with the choice of $P$ indicates the boundedness of $\{(x^k,\lambda^k)\}_{k\ge1}$. Hence, it must have a finite cluster point $(x^\infty,\lambda^\infty)$, and there is a subsequence $\{(x^k,\lambda^k)\}_{k\in\cK}$ convergent to this cluster point. From \eqref{eq:diff-lim}, it immediately follows that $Ax^\infty-b=0$. In addition, letting $\cK\ni k\to\infty$ in \eqref{eq:update-x} and using \eqref{eq:diff-lim} and \eqref{eq:lim-hat-tilde} gives
$$x_i^\infty =\argmin_{x_i}\langle\nabla_i f(x^\infty)-A_i^\top\lambda^\infty,x_i\rangle+g_i(x_i)+\frac{1}{2}\|x_i-x_i^\infty\|_{P_i}^2,\,\forall i,$$
and thus the optimality condition holds:
$$0\in \nabla_i f(x^\infty)+\partial g_i(x_i^\infty)-A_i^\top\lambda^\infty,\,\forall i.$$
Therefore, $(x^\infty,\lambda^\infty)$ satisfies the conditions in \eqref{eq:kkts}. Since \eqref{eq:1iter-bd} holds for any point $(x^*,\lambda^*)$ satisfying \eqref{eq:kkts}, it also holds with $(x^*,\lambda^*)=(x^\infty,\lambda^\infty)$. {Denote $$v=(x,\lambda),\qquad S=\left[\begin{array}{cc}P-D_H^\top{V}D_H-\beta D_A^\top{V} D_A & 0\\[0.1cm] 0 & \frac{I}{\rho}\end{array}\right].$$ Then letting $(x^*,\lambda^*)=(x^\infty,\lambda^\infty)$ in \eqref{eq:1iter-bd}, we have $\|v^{k+1}-v^\infty\|_S\le \|v^{k}-v^\infty\|_S$. From \eqref{eq:cond-P} and $\hat{P}\succ0$, it follows that $S\succ 0$, and hence $v^k$ gets closer to $v^\infty$ as $k$ increases.} Because $(x^\infty,\lambda^\infty)$ is a cluster point of $\{(x^k,\lambda^k)\}_{k\ge1}$, we obtain the convergence of $(x^k,\lambda^k)$ to $(x^\infty,\lambda^\infty)$ and complete the proof.
\end{proof}

\section{How to choose a mixing matrix}\label{sec:choose-w}
In this section, we discuss how to choose ${W}$ such that it satisfies Assumption \ref{assump3}. 
Note that the upper triangular part of $W$ has been fixed, and we only need to set its strictly lower triangular part. Denote $\cU(W)$ and $\cL(W)$ respectively as the upper and strictly lower triangular parts of $W$, i.e., $W=\cL(W)+\cU(W)$, and thus \eqref{eq:sym-w} is equivalent to requiring the existence of $u$ such that
$$\cL(W)+\cU(W)-eu^\top=\cL(W)^\top+\cU(W)^\top-ue^\top.$$ 
It suffices to let
\begin{equation}\label{eq:formula-W}
\cL(W)=\cL(W^\top+eu^\top-u e^\top)= \cL(eu^\top-u e^\top+E).
\end{equation}
Therefore, given any vector $u$, we can find a corresponding $W$ by setting its upper triangular part to all one's and its strictly lower triangular part according to the above formula.

\subsection{Finding $u$ by solving SDP}\label{sec:opt-w}
Theoretically, proximal terms help convergence guarantee of the algorithm. However, empirically, these terms can slow the convergence speed. Based on these observations, we aim at finding a block diagonal $P$ such that \eqref{eq:cond-P} holds and also is as close to \emph{zero} as possible.

One choice of $P$ satisfying \eqref{eq:cond-P} could be
\begin{equation}\label{eq:choice-P}
P_i = (1-{D}_i)H_i^\top H_i + \beta(1-{D}_i)A_i^\top A_i + d(\|H_i\|_2^2+\beta\|A_i\|_2^2) I,\,i=1,\ldots m,
\end{equation}
where ${D}=\mathrm{diag}({D}_1,\ldots,{D}_m)$ with each ${D}_i\in\{0,1\}$ for each $i$, and
\begin{equation}\label{eq:diag-d}
dI\succeq (1+\beta)({W}-e{u}^\top+{\alpha}{u}{u}^\top)-(1+\beta)(I-{D}).
\end{equation}
Note that if $P_i=\eta_i I + H_i^\top H_i$, then \eqref{eq:update-x} reduces to 
\begin{equation}\label{eq:nl-f}
x_i^{k+1}\in\argmin_{x_i} f(x_i, \hat{x}^{k,i}_{\neq i})- \left\langle A_i^\top\big(\lambda^k-\beta(A\hat{x}^{k,i}-b)\big), x_i\right\rangle+g_i(x_i)+\frac{\eta_i}{2}\|x_i-x_i^k\|^2.
\end{equation}
Hence, ${D}_i=0$ indicates no linearization to $f$ or $\|Ax-b\|^2$ at $x_i^k$, and $D_i=1$ indicates linearization to them. If \eqref{eq:nl-f} is easy to solve, one can set $D_i=0$. Otherwise, $D_i=1$ is recommended to have easier subproblems.

With $D$ fixed, to obtain $P$ according to \eqref{eq:choice-P}, we only need to specify the value of $d$. Recall that we aim at finding a close-to-zero $P$, so it would be desirable to choose ${u}$ such that $d$ is as small as possible. For simplicity, we set $\alpha=1$. 
Therefore, to minimize $d$, we solve the following optimization problem:
\begin{equation}\label{eq:find-u}
\min_{u, W} \lambda_{\max}\left(W-eu^\top +uu^\top -I +D\right), \st W-eu^\top = W^\top-ue^\top,
\end{equation} 
where $\lambda_{\max}(B)$ denotes the maximum eigenvalue of a symmetric matrix $B$.

Using \eqref{eq:formula-W}, we represent $W$ by $u$ and write \eqref{eq:find-u} equivalently to
\begin{equation}\label{eq:find-u2}
\min_{u} \lambda_{\max}\left(\cL(eu^\top-u e^\top)+E-eu^\top +uu^\top -I +D\right),
\end{equation} 
which can be further formulated as an SDP by the relation between the positive-definiteness of a $2\times 2$ block matrix and its Schur complement (c.f., \cite[Appendix A.5.5]{boyd2004convex}):
\begin{equation}\label{eq:schur-c}\left[\begin{array}{cc}A & B\\ B^\top & C\end{array}\right]\succeq 0 \Leftrightarrow A-BC^{-1}B^\top \succeq 0,
\end{equation}
where $A$ is symmetric and $C\succ0$. Let
$$\sigma=\lambda_{\max}\left(\cL(eu^\top-u e^\top)+E-eu^\top +uu^\top -I +D\right).$$
Then
$$\sigma I\succeq \cL(eu^\top-u e^\top)+E-eu^\top +uu^\top -I +D,$$
and by \eqref{eq:schur-c} it is equivalent to
$$\left[\begin{array}{cc}(\sigma+1) I-D-\cL(eu^\top-u e^\top)-E+eu^\top & u\\ u^\top & 1\end{array}\right]\succeq 0.
$$
Therefore, \eqref{eq:find-u2} is equivalent to the SDP:
\begin{equation}\label{eq:u-sdp}
\min_{\sigma, u}~ \sigma, \st \left[\begin{array}{cc}(\sigma+1) I-D-\cL(eu^\top-u e^\top)-E+eu^\top & u\\ u^\top & 1\end{array}\right]\succeq 0.
\end{equation}
Note that the problem \eqref{eq:mb-prob} can be extremely large. However, the dimension (i.e., $m$) of the SDP \eqref{eq:u-sdp} could be much smaller (see examples in section \ref{sec:experiment}) and can be efficiently and accurately solved by the interior-point method. In addition, \eqref{eq:u-sdp} does not depend on the data matrix $H$ and $A$, so we can solve it offline. 

If the sizes of $H$ and $A$ are not large, upon solving \eqref{eq:u-sdp}, one can explicitly form the matrix $D_H^\top \big((W-eu^\top+uu^\top)\otimes I\big) D_H + \beta D_A^\top \big((W-eu^\top+uu^\top)\otimes I\big) D_A$ and compute its spectral norm. This way, one can have a smaller $P$. However, for large-scale $H$ or $A$, it can be overwhelmingly expensive to do so.

In addition, note that we can add more constraints to \eqref{eq:u-sdp} to obtain a desired $W$. For instance, we can partition the $m$ blocks into several groups. Then we update the blocks in the same group in parallel in Jacobian manner and cyclically renew the groups. This corresponds to fix a few block matrices in the lower triangular part of $W$ to all one's. 

\subsection{Special cases}
A few special cases are as follows.
\begin{itemize}
\item If $u$ is the \emph{zero} vector, then by \eqref{eq:formula-W}, we have the lower triangular part of $W$ to be all one's, and this way gives a fully Jacobian BCU method. Hence, Theorem \ref{thm:sub-rate} applies for the Jacobian update method. In this case, if ${D}=I$ in \eqref{eq:diag-d}, then we have
$d \ge (1+\beta)m$ that is significantly greater than the optimal value of \eqref{eq:u-sdp}.

\item If we enforce the lower triangular part of $W$ to all \emph{zero}'s, Algorithm \ref{alg:hyb-admm} will reduce to a fully Gauss-Seidel BCU method. However, adding such constraints into \eqref{eq:u-sdp} would lead to infeasibility. Hence, Theorem \ref{thm:sub-rate} does not apply to this case.

\item If $m=2$ and $D=0$, i.e., there are only two blocks and no linearization is performed, solving \eqref{eq:u-sdp} would give the solution $\sigma=0$ and $u=(0,1)^\top$. This way, we have $$W=\left[\begin{array}{cc}1 & 1\\ 0 & 1\end{array}\right],\quad P = \left[\begin{array}{cc}H_1^\top H_1+\beta A_1^\top A_1 & 0\\ 0 &H_2^\top H_2+\beta A_2^\top A_2\end{array}\right]$$
and thus recover the 2-block ADMM with nonseparable quadratic term $f$ in the objective. Theorem \ref{thm:sub-rate} implies $O(1/t)$ convergence rate for this case, and it improves the result in \cite{chen2015Ext-ADMM}, which shows convergence of this special case but without rate estimate.
\end{itemize}

\subsection{Different mixing matrices}
We can choose two different $W$'s to explore the structures of $A$ and $Q$. Let us give an example to illustrate this. Suppose $A$ is a generic matrix and $Q$ block tridiagonal. The $W$ used to linearize the augmented term $\frac{\beta}{2}\|Ax-b\|^2$ can be set in the way as discussed in section \ref{sec:opt-w}. For the mixing matrix to $Q$, we note $H_i^\top H_j=0$ if $|i-j|>1$. Then the left hand side of \eqref{eq:bd1-y} becomes $\sum_{|i-j|\le 1}w_{ij}\big\langle H_i(x_i^{k+1}-x_i), H_j(x_j^{k+1}-x_j^k)\big\rangle$. Hence, following our analysis, we would require there exists $\hat{u}$ such that $\hat{W}-e^\top \hat{u}$ is symmetric, where $\hat{w}_{ij}=0$ if $|i-j|>1$ and $\hat{w}_{ij}=1$ if $i\le j\le i+1$. To completely determine $\hat{W}$, we only need to set its values on the subdiagonal. Similar to \eqref{eq:find-u}, we can find $\hat{u}$ and $\hat{w}_{i+1,i}$'s through solving
$$
\begin{aligned}
&\min_{\hat{u}, \hat{W}} \lambda_{\max}\left(\hat{W}-e\hat{u}^\top +\hat{u}\hat{u}^\top -I +D\right), \\
&\st \hat{W}-e\hat{u}^\top = \hat{W}^\top-\hat{u}e^\top, \hat{w}_{ij}=0, \forall |i-j|>1,\, \hat{w}_{ij}=1, i\le j\le i+1.
\end{aligned}
$$
The optimal value of the above problem is significantly smaller than that of \eqref{eq:find-u}, and thus in \eqref{eq:choice-P} the coefficient before $\|H_i\|^2$ can be set smaller. This way, we will have a smaller $P_i$'s, which can potentially make the algorithm converge faster.

\section{Numerical experiments}\label{sec:experiment}
In this section, we apply Algorithm \ref{alg:hyb-admm} to three problems: quadratic programming, compressive principal component pursuit, and the multi-class support vector machine problem. We test it with two different mixing matrices: all-one matrix and the one given by the method discussed in section \ref{sec:opt-w}. The former corresponds to a fully Jacobian update method and the latter to a hybrid Jacobian and Gauss-Seidel method, dubbed as Jacobi-PC and JaGS-PC respectively. We compare them to a recently proposed randomized proximal block coordinate update method (named as random-PC) in \cite{GXZ-RPDCU2016}, the ADMM with Gauss back substitution (named as ADMM-GBS) in \cite{he2012ADM-GBS}, and also the direct ADMM. Note that the direct ADMM is not guaranteed to converge for problems with more than two blocks, but empirically it can often perform well. ADMM-GBS is designed for separable multi-block convex problems. It does not allow linearization to the augmented term. In addition, it requres all $A_i$'s to be full-column rank. Hence, the proposed algorithm is applicable to broader class of problems, but we observe that JaGS-PC can be comparable to direct ADMM and ADMM-GBS.

We choose to compare with random-PC, direct ADMM, and ADMM-GBS because as well as the proposed methods, all of them have low per-iteration complexity and low memory requirement and belong to inexact ALM framework. On solving the three problems, one can also apply some other methods such as the interior-point method and the projected subgradient method. The interior-point method can converge faster than the proposed ones in terms of iteration number. However, its per-iteration complexity is much higher, and thus total running time can be longer (that is observed for the quadratic programming). In addition, it has a high demand on machine memory and may be inapplicable for large-scale problems such as the compressive principal component pursuit. The projected subgradient method has similar per-iteration complexity as the proposed ones but converges much slower.

In all our tests, we report the results based the actual iterate $x^k$, which is guaranteed to converge to an optimal solution. Although the convergence rate in Theorem \ref{thm:sub-rate} is based on the averaged point $\bar{x}^{t+1}$ (i.e., in ergodic sense), numerically we notice that the convergence speed based on the iterate $x^k$ is often faster than that based on the averaged point. This phenomenon also happens to the classic two-block ADMM. The work \cite{he2012rate-DRS} shows that ADMM has an ergodic sublinear convergence rate but all applications of ADMM still use the actual iterate as the solution.  

\subsection{Adaptive proximal terms}\label{sec:adaptive}
As we mentioned previously, the proximal terms used in \eqref{eq:update-x} help the convergence guarantee but can empirically slow the convergence speed (see Figures \ref{fig:nonadp-qp} and \ref{fig:qp}). Here, we set $P_i$'s similar to \eqref{eq:choice-P} but with a simple adaptive way as follows:
\begin{equation}\label{eq:adap-P}P_i^k = (1-{D}_i)H_i^\top H_i + \beta(1-{D}_i)A_i^\top A_i + d^k(\|H_i\|_2^2+\beta\|A_i\|_2^2) I,\,i=1,\ldots m.
\end{equation}
After each iteration $k$, we check if the following inequality holds
\begin{align}\label{eq:check-ineq}
\eta\|x^{k+1}-x^k\|_{P^k}^2\le & \|y^{k+1}-y^k\|_{{V}}^2+\big\|({u}^\top\otimes I)(y^{k+1}-y^k)\big\|^2\\
&+\beta \|z^{k+1}-z^k\|_{{V}}^2+\beta\big\|({u}^\top\otimes I)(z^{k+1}-z^k)\big\|^2,\nonumber
\end{align}
and set
\begin{equation}\label{eq:adap-d}
d^k=\left\{\begin{array}{ll}\min\left(d^{k-1}+d_{\mathrm{inc}}, d_{\max}\right), &\text{if }\eqref{eq:check-ineq}\text{ holds}\\[0.2cm]
 d^{k-1}, &\text{otherwise} \end{array}\right.
 \end{equation}
 where $d_{\mathrm{inc}}$ is a small positive number, and $\eta=0.999$ is used\footnote{In the proof of Theorem \ref{thm:sub-rate}, we bound the $y$ and $z$-terms by $x$-term. If the left hand side of \eqref{eq:check-ineq} with $\eta<1$ can upper bound the right hand side, then Theorem \ref{thm:sub-rate} guarantees the convergence of $x^k$ to an optimal solution. Numerically, taking $\eta$ close to 1 would make the algorithm more efficient.} in all the tests. For stability and also efficiency, we choose $d^1$ and $d_{\mathrm{inc}}$ such that \eqref{eq:check-ineq} happens not many times. Specifically, we first run the algorithm to 20 iterations with $(d^1, d_{\mathrm{inc}})$ selected from $\{0,0.5,1\}\times\{0.01, 0.1\}$. If there is one pair of values such that \eqref{eq:check-ineq} does not always hold within the 20 iterations\footnote{We notice that if \eqref{eq:check-ineq} happens many times, the iterate may be far away from the optimal solution in the beginning, and that may affect the overall performance; see Figures \ref{fig:pcp-Jac} and \ref{fig:svm-Jac}.}, we accept that pair of $(d^1, d_{\mathrm{inc}})$. Otherwise, we simply set $d^k=d_{\max},\,\forall k$. 
For Jacobi-PC, we set $d_{\max}=\lambda_{\max}(E-I+D)$, and for JaGS-PC, we set $d_{\max}$ to the optimal value of \eqref{eq:u-sdp}, which is solved by SDPT3 \cite{toh1999sdpt3} to high accuracy with stopping tolerance $10^{-12}$. Note that as long as $d_{\mathrm{inc}}$ is positive, $d^k$ can only be incremented in finitely many times, and thus \eqref{eq:check-ineq} can only happen in finitely many iterations. In addition, note that both sides of \eqref{eq:check-ineq} can be evaluated as cheaply as computing $x^\top Q x$ and $x^\top A^\top A x$.

The above adaptive way of setting $P^k$ is inspired from our convergence analysis. If after $k_0$ iterations, \eqref{eq:check-ineq} never holds. Then we can also show a sublinear convergence result by summing \eqref{eq:1iter} from $k=k_0$ through $t$ and then following the same arguments as those in the proof of Theorem \ref{thm:sub-rate}. 

\subsection{Quadratic programming}
We test Jacobi-PC and JaGS-PC on the nonnegative linearly constrained quadratic programming
\begin{equation}\label{eq:qp}
\min_x F(x)=\frac{1}{2} x^\top Q x + c^\top x, \st Ax=b,\, x\ge 0,
\end{equation}
where $Q\in \RR^{n\times n}$ is a symmetric PSD matrix, $A\in\RR^{p\times n}$ and $b\in\RR^p, c\in\RR^n$. In the test, we set $p=200, n=2000$  and $Q = H^\top H$ with $H\in\RR^{(n-10)\times n}$ generated according to the standard Gaussian distribution. This generated $Q$ is degenerate, and thus the problem is only weakly convex. The entries of $c$ follow i.i.d. standard Gaussian distribution and those of $b$  from uniform distribution on $[0,1]$. We set $A=[B,I]$ to guarantee feasibility of the problem with $B$ generated according to standard Gaussian distribution.

We evenly partition the variable $x$ into $m=40$ blocks, each one consisting of 50 coordinates. The same values of parameters are set for both Jacobi-PC and JaGS-PC as follows:
$$\beta=\rho=1,\, {D} = I,\, d^1=0.5,\, d_{\mathrm{inc}}=0.1.$$
They are compared to random-PC that uses the same penalty parameter $\beta=1$ and $\rho=\frac{1}{m}$ according to the analysis in \cite{GXZ-RPDCU2016}. We also adaptively increase the proximal parameter of random-PC in a way similar to that in section \ref{sec:adaptive}. All three methods run to 500 epochs, where each epoch is equivalent to updating all blocks one time. Their per-epoch complexity is almost the same. To evaluate their performance, we compute the distance of the objective to optimal value $|F(x^k)-F(x^*)|$ and the violation of feasibility $\|Ax^k-b\|$ at each iteration $k$, where the optimal solution is obtained by MATLAB solver \verb|quadprog| with ``interior-point-convex'' option. Figures \ref{fig:nonadp-qp} and \ref{fig:qp} plot the results by the three methods in terms of both iteration number and also running time (sec). In Figure \ref{fig:nonadp-qp}, we simply set $d^k=d_{\max}$ for Jacobi-PC and JaGS-PC, i.e., without adapting the proximal terms, where in this test, $d_{\max}=18.3273$ for JaGS-PC and $d_{\max}=40$ for Jacobi-PC. From the figures, we see that JaGS-PC is significantly faster than Jacobi-PC in terms of both objective and feasibility for both adaptive and nonadaptive cases, and random-PC is slightly slower than JaGS-PC.
\begin{figure}
\begin{center}
\begin{tabular}{cc}
\includegraphics[width=0.35\textwidth]{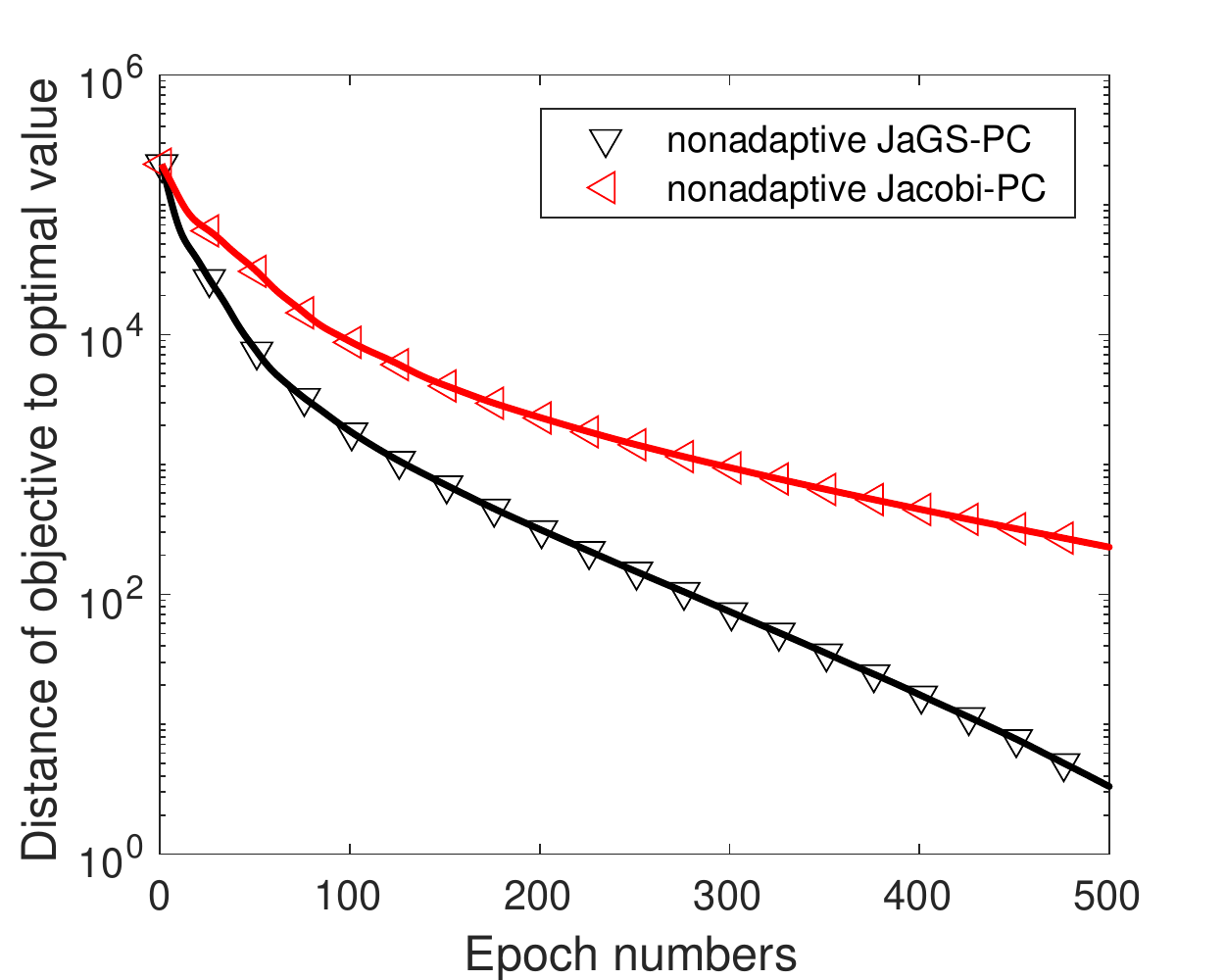} &
\includegraphics[width=0.35\textwidth]{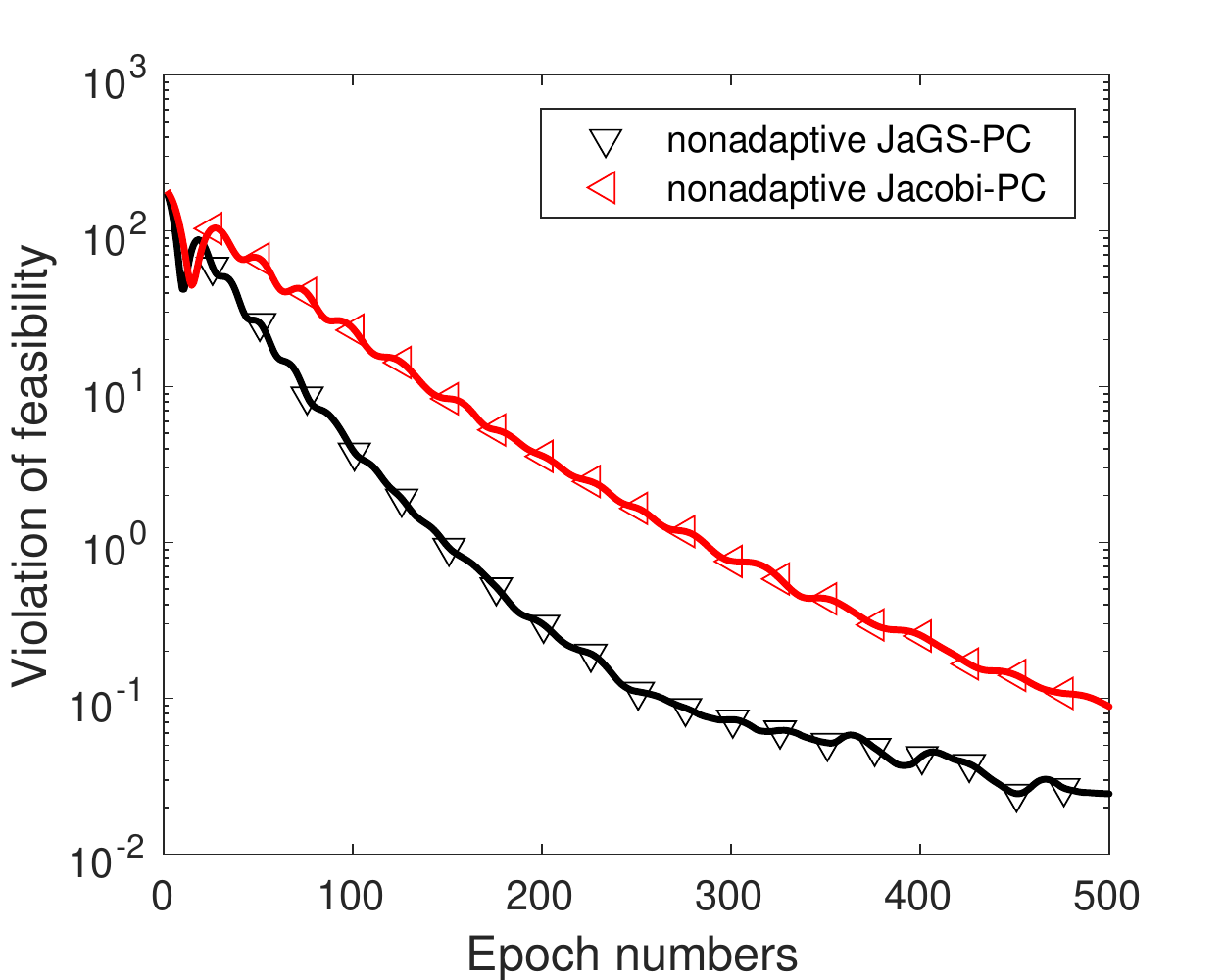}
\end{tabular}
\end{center}
\caption{Results by applying Jacobi-PC and JaGS-PC without adapting proximal terms on the quadratic programming \eqref{eq:qp}. Left: the distance of the objective to optimal value $|F(x^k)-F(x^*)|$; Right: the violation of feasibility $\|Ax^k-b\|$.}\label{fig:nonadp-qp}
\end{figure}

\begin{figure}[h]
\begin{center}
\begin{tabular}{cc}
\includegraphics[width=0.35\textwidth]{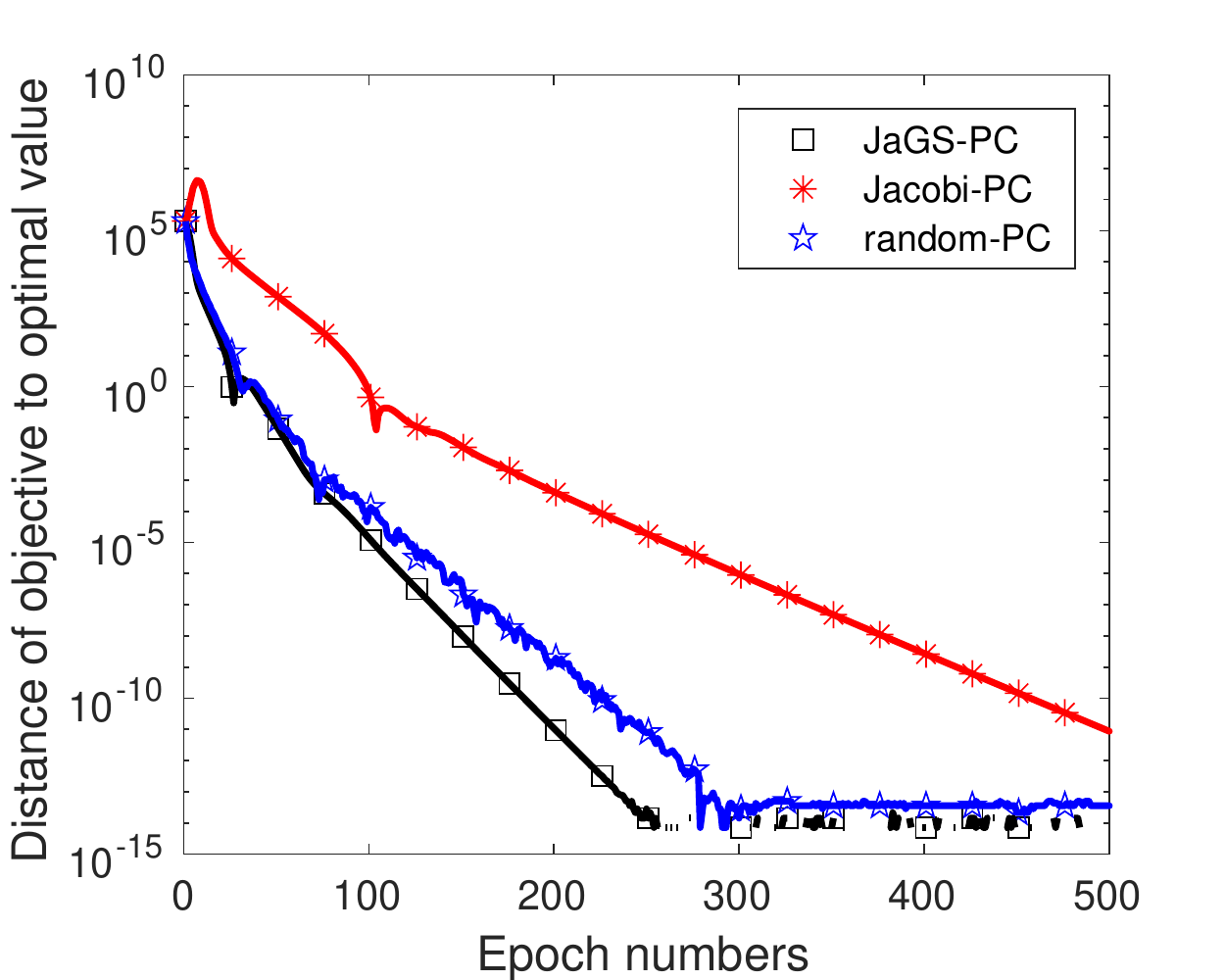} &
\includegraphics[width=0.35\textwidth]{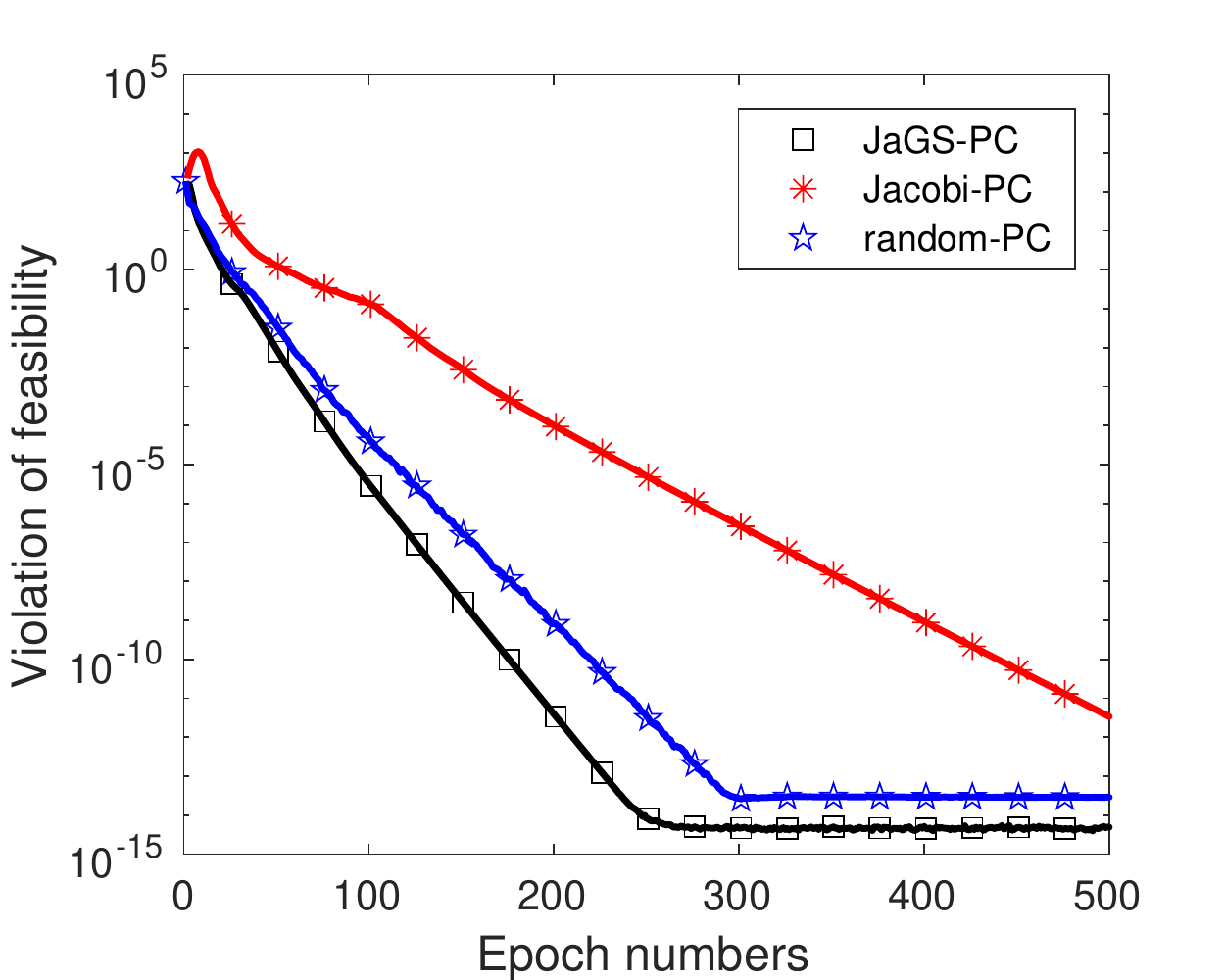}\\
\includegraphics[width=0.35\textwidth]{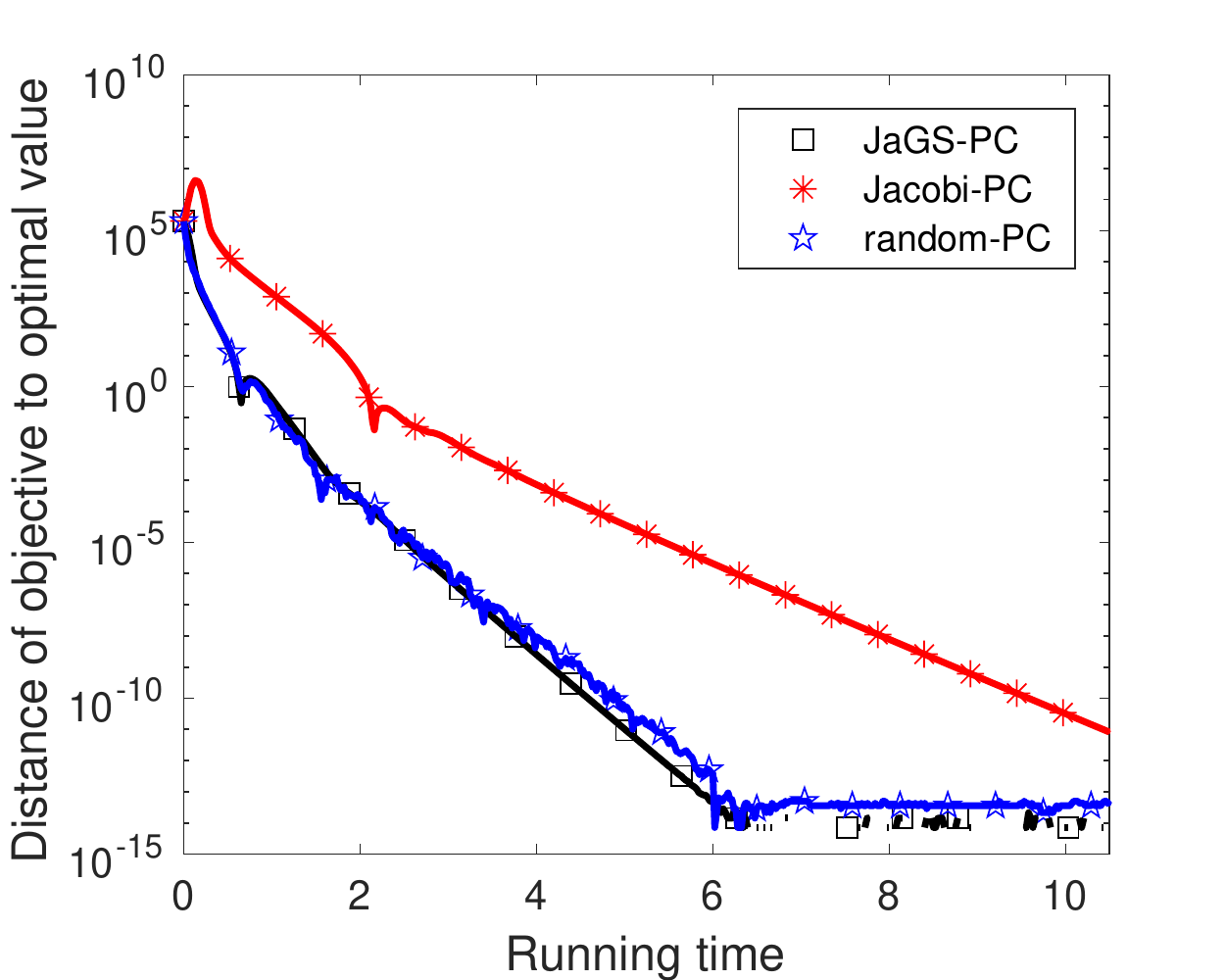} &
\includegraphics[width=0.35\textwidth]{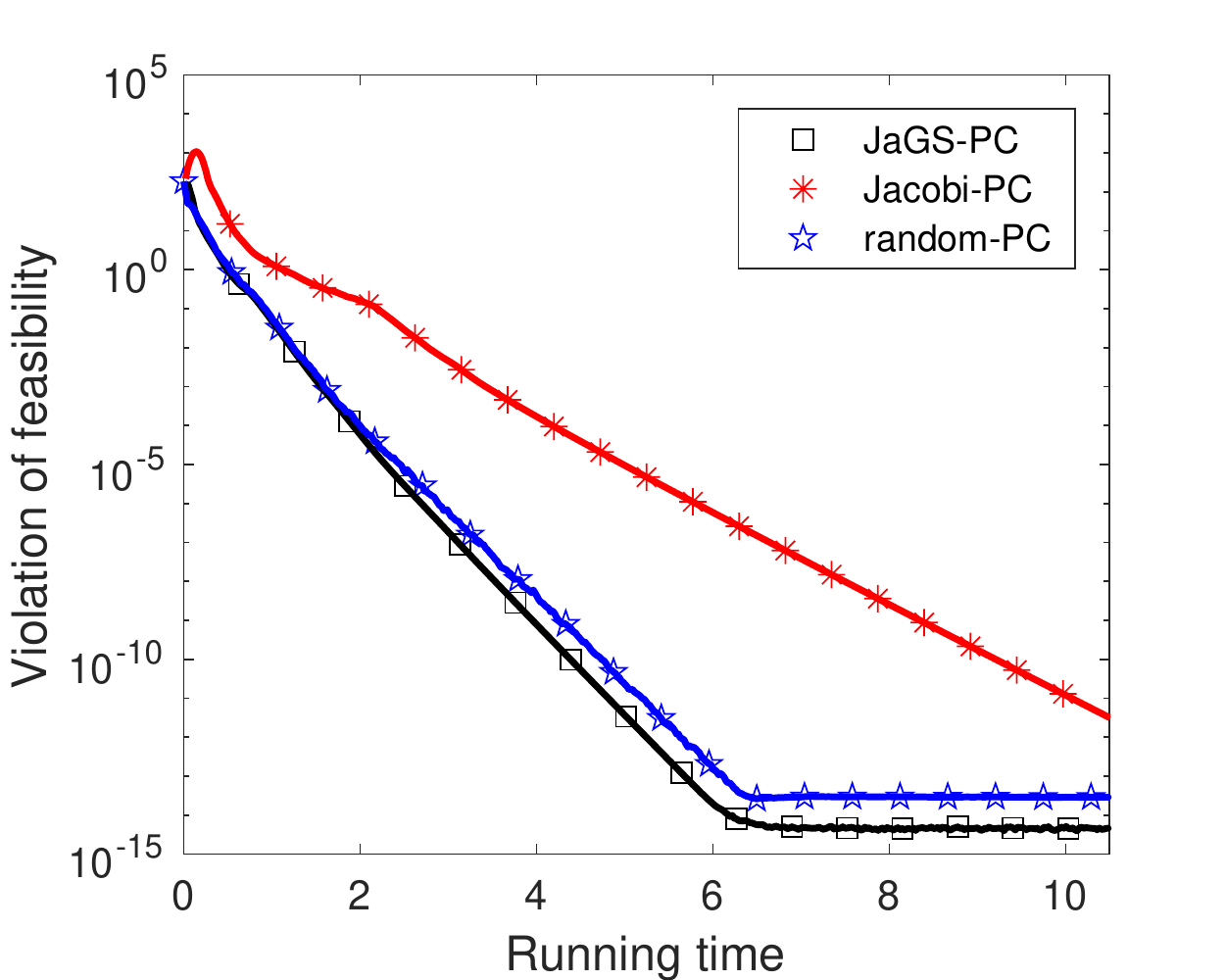}
\end{tabular}
\end{center}
\caption{Results by applying three different proximal block coordinate update methods: Jacobi-PC, JaGS-PC, and random-PC on the quadratic programming \eqref{eq:qp}. Left: the distance of the objective to optimal value $|F(x^k)-F(x^*)|$; Right: the violation of feasibility $\|Ax^k-b\|$. The running time of JaGS-PC includes that for finding the mixing matrix $W$.}\label{fig:qp}
\end{figure}

\subsection{Compressive principal component pursuit}
In this subsection, we test Jacobi-PC and JaGS-PC on
\begin{equation}\label{eq:cpcp}
\min_{X, Y} F(X,Y)=\mu\|X\|_1 + \|Y\|_*, \st \cA(X+Y) = b,
\end{equation}
where $\|X\|_1=\sum_{i,j}|X_{ij}|$, $\|Y\|_*$ denotes the matrix operator norm and equals the largest singular value of $Y$, $\cA$ is a linear operator, and $b$ contains the measurements. If $\cA$ is the identity operator, \eqref{eq:cpcp} is called the principal component pursuit (PCP) proposed in \cite{candes2011robust-PCA}, and it is compressive PCP \cite{wright2013compress-PCP} when $\cA$ is an underdetermined measuring operator. We consider the sampling operator, i.e., $\cA=\cP_\Omega$, where $\Omega$ is an index set and $\cP_\Omega$ is a projection keeping the entries in $\Omega$ and zeroing out all others.

Assume $M$ to be the underlying matrix and $b=\cA(M)$. Upon solving \eqref{eq:cpcp}, $X+Y$ recovers $M$ with sparse part $X$ and low-rank part $Y$. We use the Escalator video dataset\footnote{Available from \url{http://pages.cs.wisc.edu/~jiaxu/projects/gosus/supplement/}}, which has 200 frames of $130\times 160$ images. Each frame of 2D image is reshaped into a column vector to form a slim and tall matrix $M$ with 200 columns. For this data, $X$ will encode the foreground and $Y$ the background of the video. We generate the index set $\Omega$ uniformly at random,  and $30\%$ samples are selected from each frame of image. 


Introducing another variable $Z$, we write \eqref{eq:cpcp} equivalently to
\begin{equation}\label{eq:cpcp2}
\min_{X, Y, Z} \mu\|X\|_1 + \|Y\|_*, \st X+Y=Z,\ \cA(Z) = b,
\end{equation}
which naturally has three block variables.
Applying Algorithm \ref{alg:hyb-admm} to \eqref{eq:cpcp2} with adaptive $P_i$'s as in \eqref{eq:adap-P} and ${D}=0$ and noting $\|I+\cA^\top\cA\|_2=2$, we iteratively perform the updates:
\begin{subequations}\label{eq:hadmm-pcp}
\begin{align}
&X^{k+1}=\argmin_X \mu\|X\|_1 -\langle \Lambda^k, X\rangle +\frac{\beta}{2}\|X+Y^k-Z^k\|_F^2+\frac{d^k\beta}{2}\|X-X^k\|^2,\label{eq:pcp-x}\\
&Y^{k+1}=\argmin_Y \|Y\|_* -\langle \Lambda^k, Y\rangle +\frac{\beta}{2}\|\tilde{X}^k+Y-Z^k\|_F^2+\frac{d^k\beta}{2}\|Y-Y^k\|^2,\label{eq:pcp-y}\\
&Z^{k+1}=\argmin_Z \langle \Lambda^k-\cA^\top(\Pi^k), Z\rangle+\frac{\beta}{2}\|\tilde{X}^k+\tilde{Y}^k-Z^k\|_F^2+\frac{\beta}{2}\|\cA(Z)-b\|^2+{d^k\beta}\|Z-Z^k\|^2,\label{eq:pcp-z}\\
&\Lambda^{k+1}=\Lambda^k-\rho(X^{k+1}+Y^{k+1}-Z^{k+1}),\label{eq:pcp-lam}\\
&\Pi^{k+1}=\Pi^k-\rho(\cA(Z^{k+1})-b).\label{eq:pcp-pi}
\end{align}
\end{subequations}
Since $\cA\cA^\top=I$, all three primal subproblems have closed-form solutions. We set $\beta=\rho=0.05$ and $d_{\mathrm{inc}}=0.01$ for both JaGS-PC and Jacobi-PC methods, and $d^1=0$ for JaGS-PC and $d^1=1$ for Jacobi-PC because the latter can deviate from optimality very far away in the beginning if it starts with a small $d^1$ (see Figure \ref{fig:pcp-Jac}). They are compared to random-PC, direct ADMM, and also ADMM-GBS. Every iteration, random-PC performs one update among \eqref{eq:pcp-x} through \eqref{eq:pcp-z} with $d^k=0$ and then updates $\Lambda$ and $\Pi$ by \eqref{eq:pcp-lam} and \eqref{eq:pcp-pi} with $\rho=\frac{\beta}{3}$;  the direct ADMM sets $d^k=0,\,\forall k$ in \eqref{eq:hadmm-pcp}; ADMM-GBS runs the direct ADMM first and then performs a correction step by Gauss back substitution. We use the same $\beta$ and $\rho$ for the direct ADMM and ADMM-GBS and set the correction step parameter of ADMM-GBS to 0.99. On solving the SDP \eqref{eq:u-sdp}, we have for JaGS-PC $d_{\max}=0.4270$ and the mixing matrix:
$$W=\left[\begin{array}{ccc}1 &   1 &   1\\
    0.3691      &   1 &   1\\
   -0.2618   & 0.3691 &        1\end{array}\right].$$Figure \ref{fig:pcp} plots the results by all five methods, where the optimal solution is obtained by running JaGS-PC to 10,000 epochs. From the figure, we see that JaGS-PC performs significantly better than Jacobi-PC. JaGS-PC, direct ADMM and ADMM-GBS perform almost the same, and random-PC is the worst.  
Note that although direct ADMM works well on this example, its convergence is not guaranteed in general. 

\begin{figure}
\begin{center}
\begin{tabular}{cc}
\includegraphics[width=0.35\textwidth]{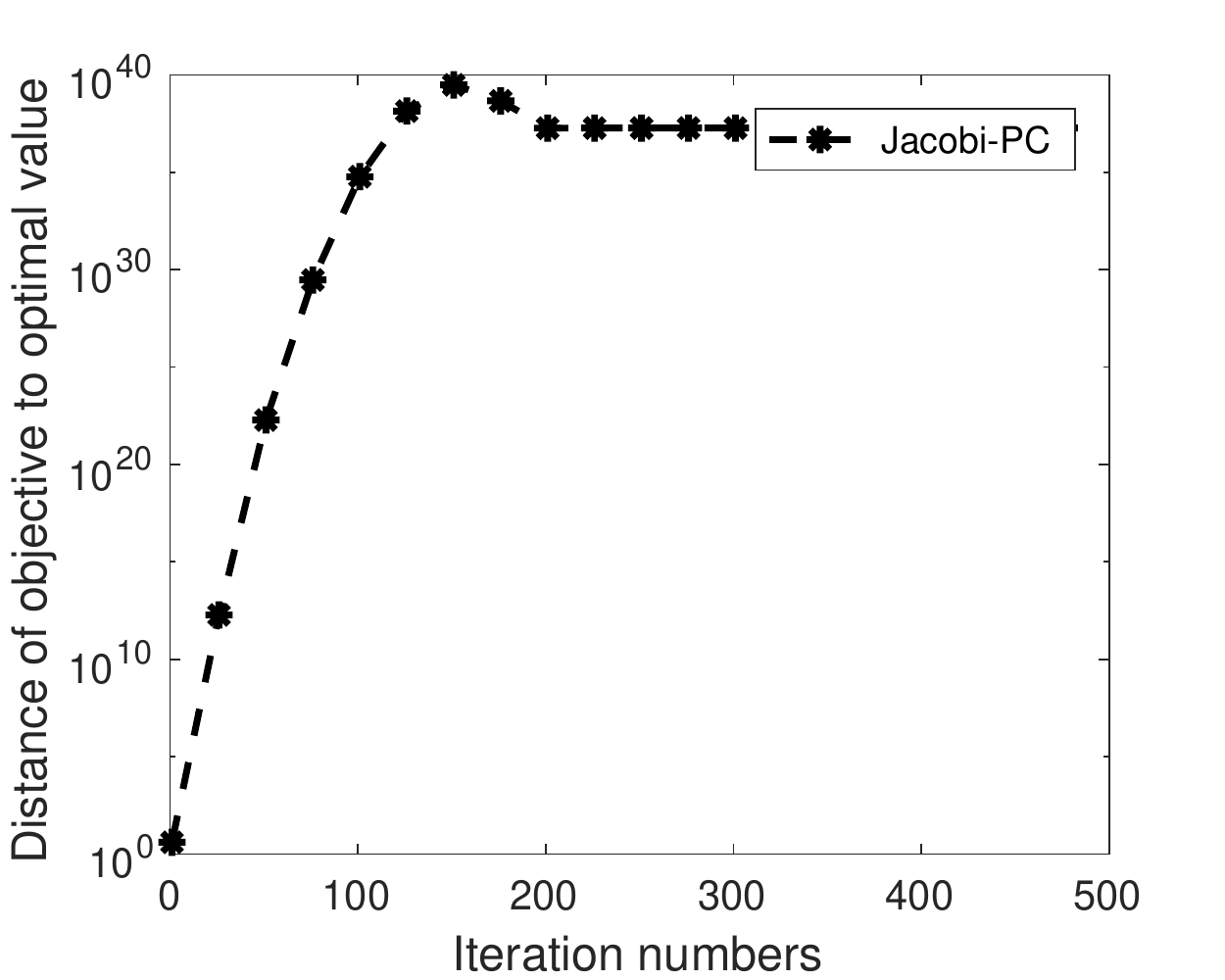} &
\includegraphics[width=0.35\textwidth]{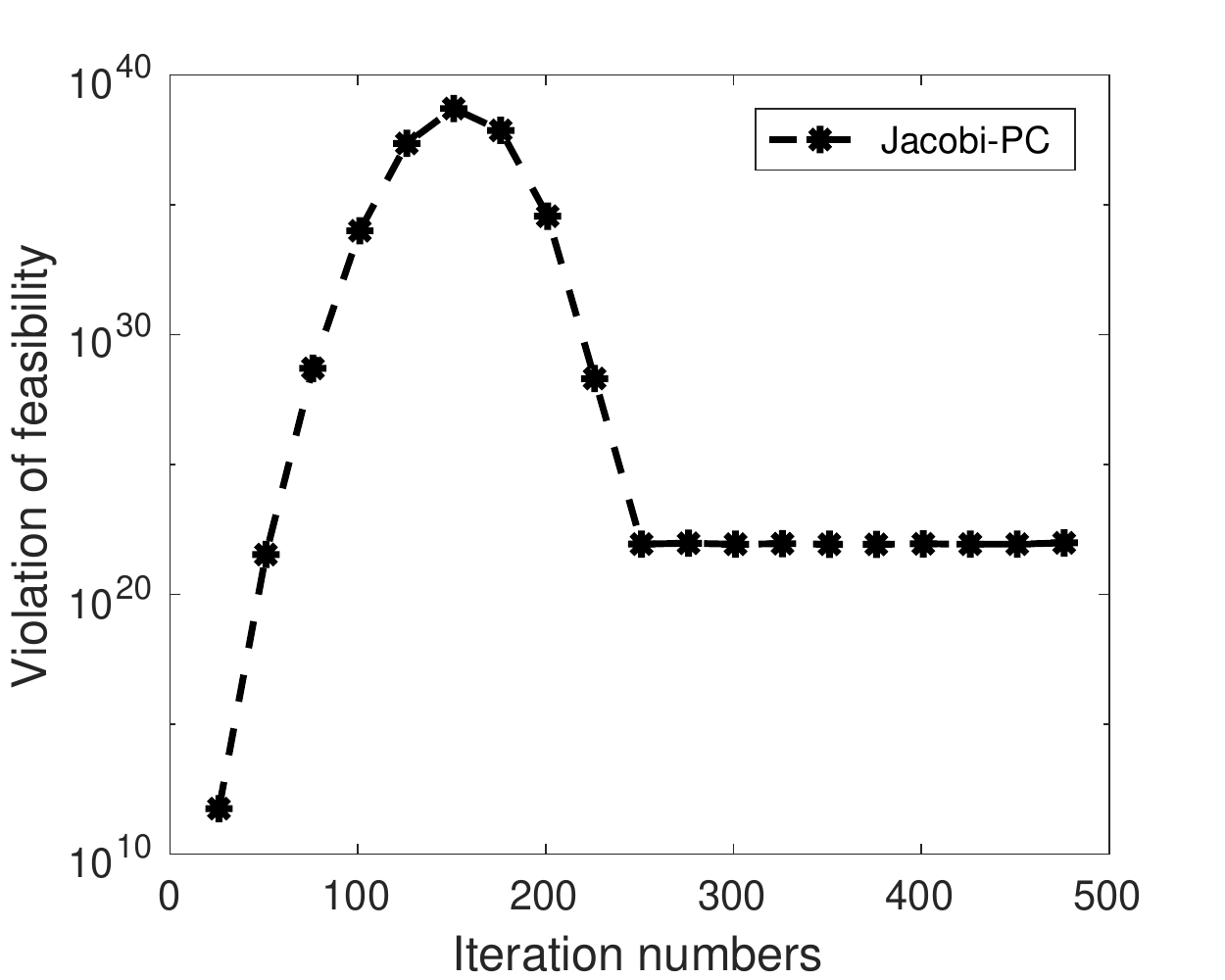}
\end{tabular}
\end{center}
\caption{Results by Jacobi-PC with $d^1=0$ and $d_{\mathrm{inc}}=0.01$ for solving the compressive principal component pursuit \eqref{eq:cpcp2} on the Escalator dataset. Left: the relative error between the objective and optimal value $\frac{|F(X^k,Y^k)-F(X^*,Y^*)|}{F(X^*,Y^*)}$; Right: relative violation of feasibility: $\frac{\|X^k+Y^k-Z^k\|_F+\|\cA(Z^k)-b\|_F}{\|M\|_F}$.}\label{fig:pcp-Jac}
\end{figure}

\begin{figure}[h]
\begin{center}
\begin{tabular}{cc}
\includegraphics[width=0.35\textwidth]{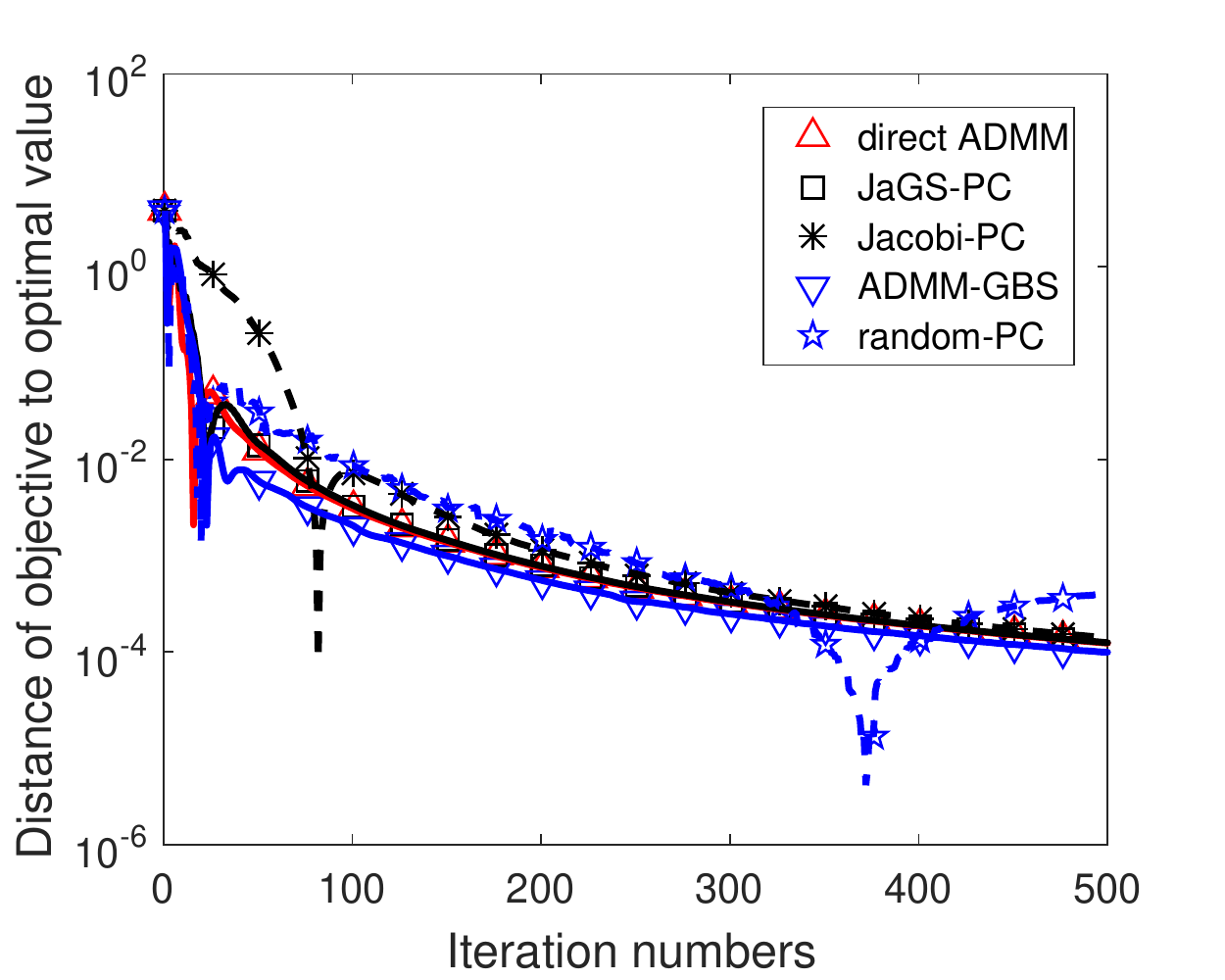} &
\includegraphics[width=0.35\textwidth]{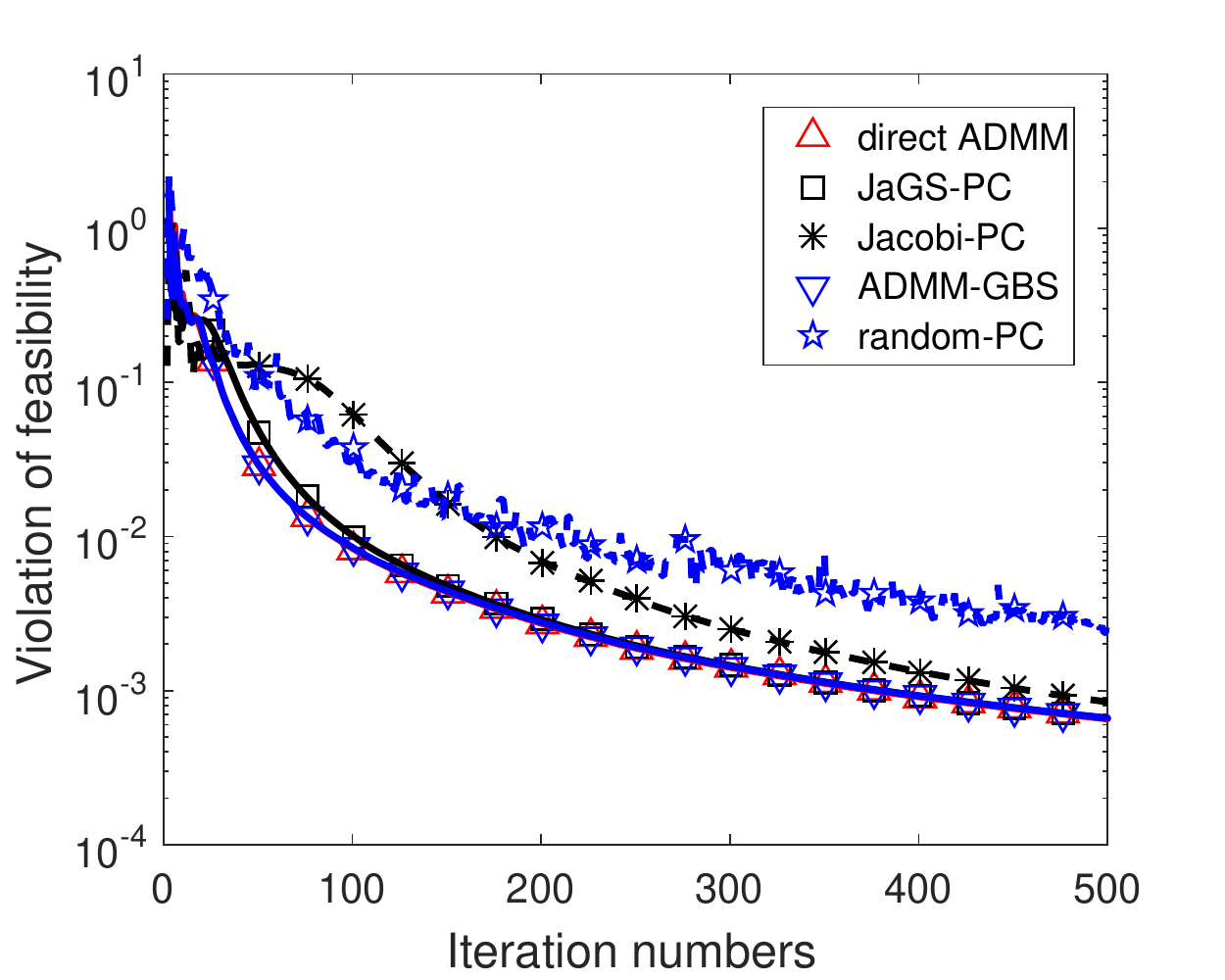}\\
\includegraphics[width=0.35\textwidth]{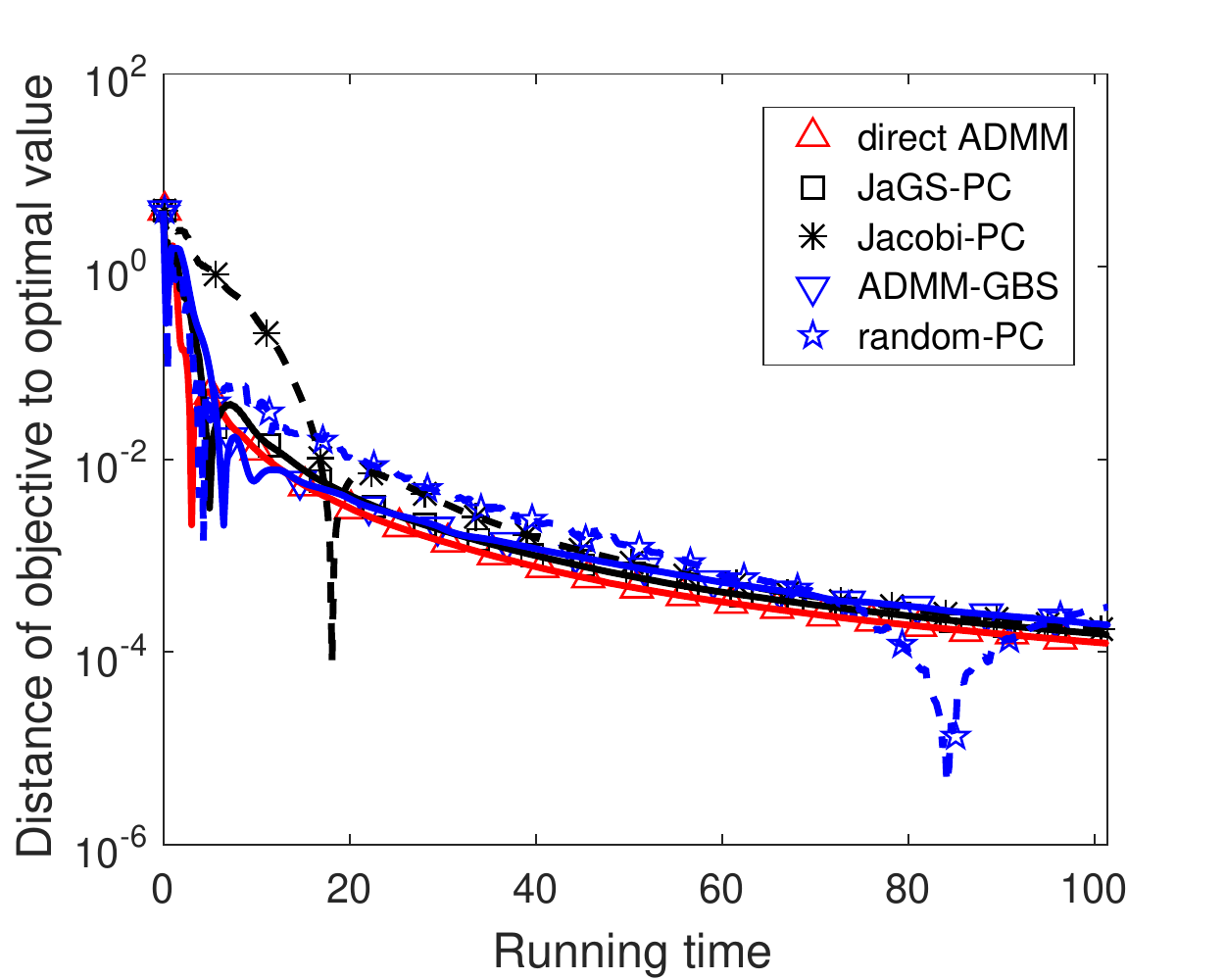} &
\includegraphics[width=0.35\textwidth]{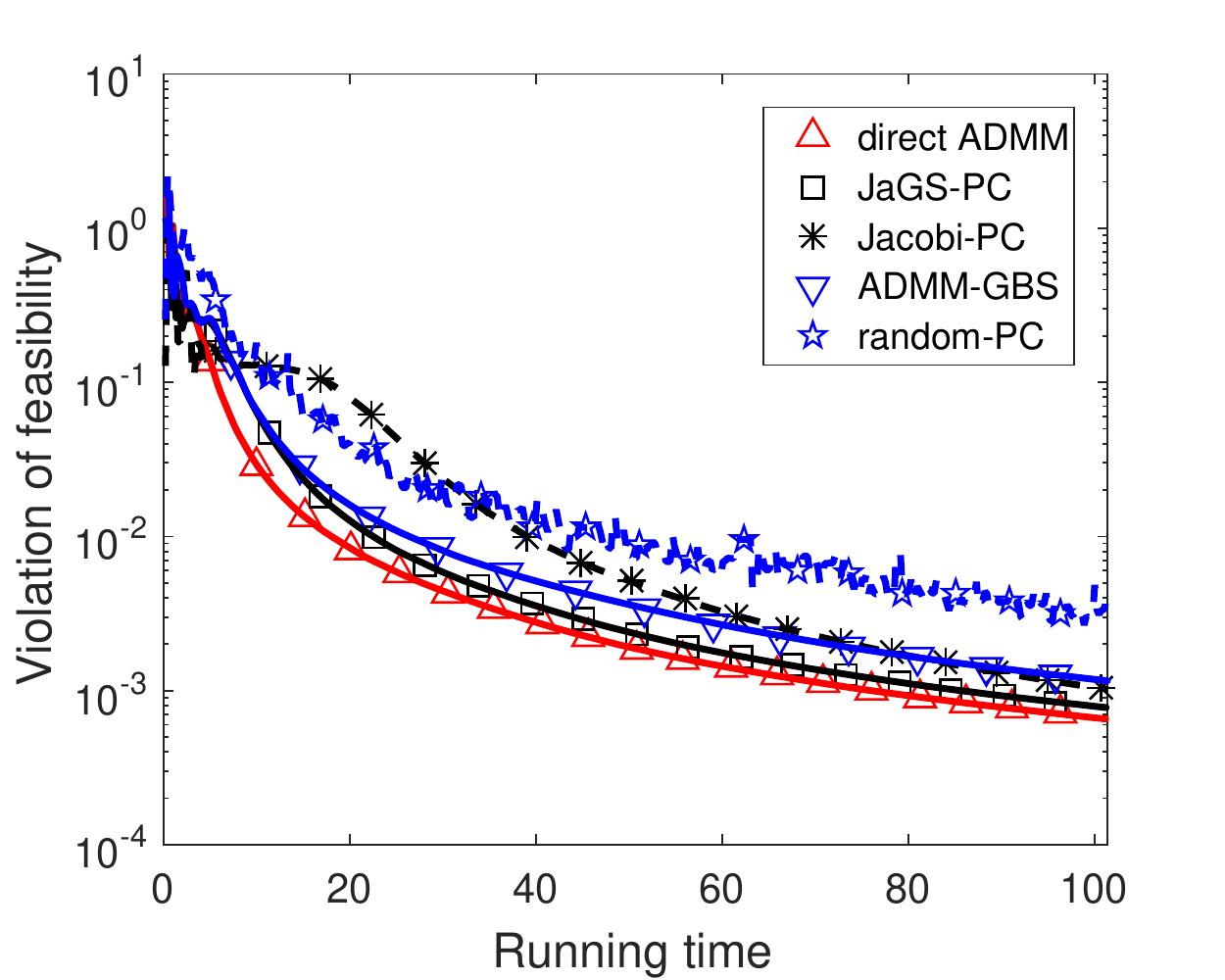}
\end{tabular}
\end{center}
\caption{Results by five different methods for solving the compressive principal component pursuit \eqref{eq:cpcp2} on the Escalator dataset. Left: the relative error between the objective and optimal value $\frac{|F(X^k,Y^k)-F(X^*,Y^*)|}{F(X^*,Y^*)}$; Right: relative violation of feasibility: $\frac{\|X^k+Y^k-Z^k\|_F+\|\cA(Z^k)-b\|_F}{\|M\|_F}$. The running time of JaGS-PC includes that for finding the mixing matrix $W$.}\label{fig:pcp}
\end{figure}

\subsection{Multi-class support vector machine}
In this subsection, we test Jacobi-PC, JaGS-PC, and random-PC on the multi-class support vector machine (MSVM) problem that is considered in \cite{xu2015admm-msvm}:
\begin{equation}\label{eq:msvm}
\min_X F(X)=\frac{1}{n}\sum_{i=1}^n\sum_{j=1}^c \iota_{\cdot\neq j}(b_i)[x_j^\top a_i+1]_+ +\mu\|X\|_1,\st Xe = 0,
\end{equation}
where $x_j$ is the $j$-th column of $X$, $\{(a_i,b_i)\}_{i=1}^n$ is the training dataset with label $b_i\in\{1,\ldots,c\},\,\forall i$, $\iota_{\cdot\neq j}(b_i)$ equals \emph{one} if $b_i\neq j$ and \emph{zero} otherwise, and $[d]_+=\max(0,d)$. We set the number of classes to $c=3$ and randomly generate the data according to Gaussian distribution $\cN(v_j,\Sigma_j)$ for the $j$-th class, where $v_j\in\RR^p$ and $\Sigma_j\in\RR^{p\times p}$ for $j=1,2,3$ are
\begin{align*}
&v_1=\left[\begin{array}{c}E_{s\times 1}\\ 0\end{array}\right],\, v_2=\left[\begin{array}{c}0_{s/2 \times  1}\\E_{s\times  1}\\ 0\end{array}\right],\,v_3=\left[\begin{array}{c}0_{s\times 1}\\ E_{s\times 1}\\ 0\end{array}\right],\,\Sigma_1=\left[\begin{array}{cc}\sigma E_{s\times s}+(1-\sigma) I & 0\\ 0 & I\end{array}\right],\\[0.1cm]
& \Sigma_2= \left[\begin{array}{ccc}I_{\frac{s}{2}\times\frac{s}{2}} & 0 & 0\\ 0 &\sigma E_{s\times s}+(1-\sigma) I & 0\\ 0 & 0 & I\end{array}\right],\,\Sigma_3= \left[\begin{array}{ccc}I_{s\times s} & 0 & 0\\ 0 &\sigma E_{s\times s}+(1-\sigma) I & 0\\ 0 & 0 & I\end{array}\right],
\end{align*}
where $E, I, 0$  respectively represent all-one, identity, and all-zero matrices of appropriate sizes, and the subscript specifies the size. The parameter $\sigma$ measures correlation of features. This kind of dataset has also been used in \cite{xu2015HHSVM} for testing binary SVM. In the test, we set $\sigma=0.1, \mu=0.001, p=200$ and $n= 300$, each class consisting of 100 samples.

Letting $A=[a_1,\ldots, a_n]$ and $Y=A^\top X+1$, we write \eqref{eq:msvm} equivalently to
\begin{equation}\label{eq:msvm2}
\min_{X,Y} \frac{1}{n}\sum_{i=1}^n\sum_{j=1}^c \iota_{\cdot\neq j}(b_i)[Y_{ij}]_+ +\mu\|X\|_1,\st A^\top X- Y + 1 = 0,\, Xe = 0.
\end{equation}
To apply Algorithm \ref{alg:hyb-admm} to the above model, we partition the variable into four blocks $(x_1, x_2, x_3, Y)$. Linearization to the augmented term is employed, i.e., ${D}=I$ in \eqref{eq:adap-P}. The parameters are set to $\beta=\rho=0.005$ and $d_{\mathrm{inc}}=0.1$ for both Jacobi-PC and JaGS-PC, and $d^1=0.5$ for JaGS-PC and $d^1=1$ for Jacobi-PC because again the latter can deviate from optimalty far away in the beginning if it starts with a small $d^1$ (see Figure \ref{fig:svm-Jac}). Each iteration, random-PC picks one block from $x_1,x_2,x_3$ and $Y$ uniformly at random and updates it by minimizing the proximal linearized augmented Lagrangian function with respect to the selected block and the other three blocks fixed. The proximal parameter is adaptively increased as well. In the multiplier update, $\rho=\frac{\beta}{4}$ is set, and $\beta$ is the same as that for JaGS-PC. On solving the SDP \eqref{eq:u-sdp}, we have for JaGS-PC $d_{\max}=1.8711$ and the mixing matrix:
$$W=\left[\begin{array}{cccc}1  &  1 &   1 &   1\\
    0.5353  &  1 &    1 &    1 \\
    0.0705 &   0.5353  &   1 &    1\\
   -0.3942  &   0.0705  &  0.5353  &  1\end{array}\right].$$We plot the results in Figure \ref{fig:svm}, where the optimal solution is given by CVX \cite{grant2008cvx} with ``high precision'' option. In terms of objective value, JaGS-PC and random-PC perform significantly better than Jacobi-PC, and the former two are comparably well. However, random-PC is significantly worse than JaGS-PC and Jacobi-PC in terms of feasibility violation. 

\begin{figure}
\begin{center}
\begin{tabular}{cc}
\includegraphics[width=0.35\textwidth]{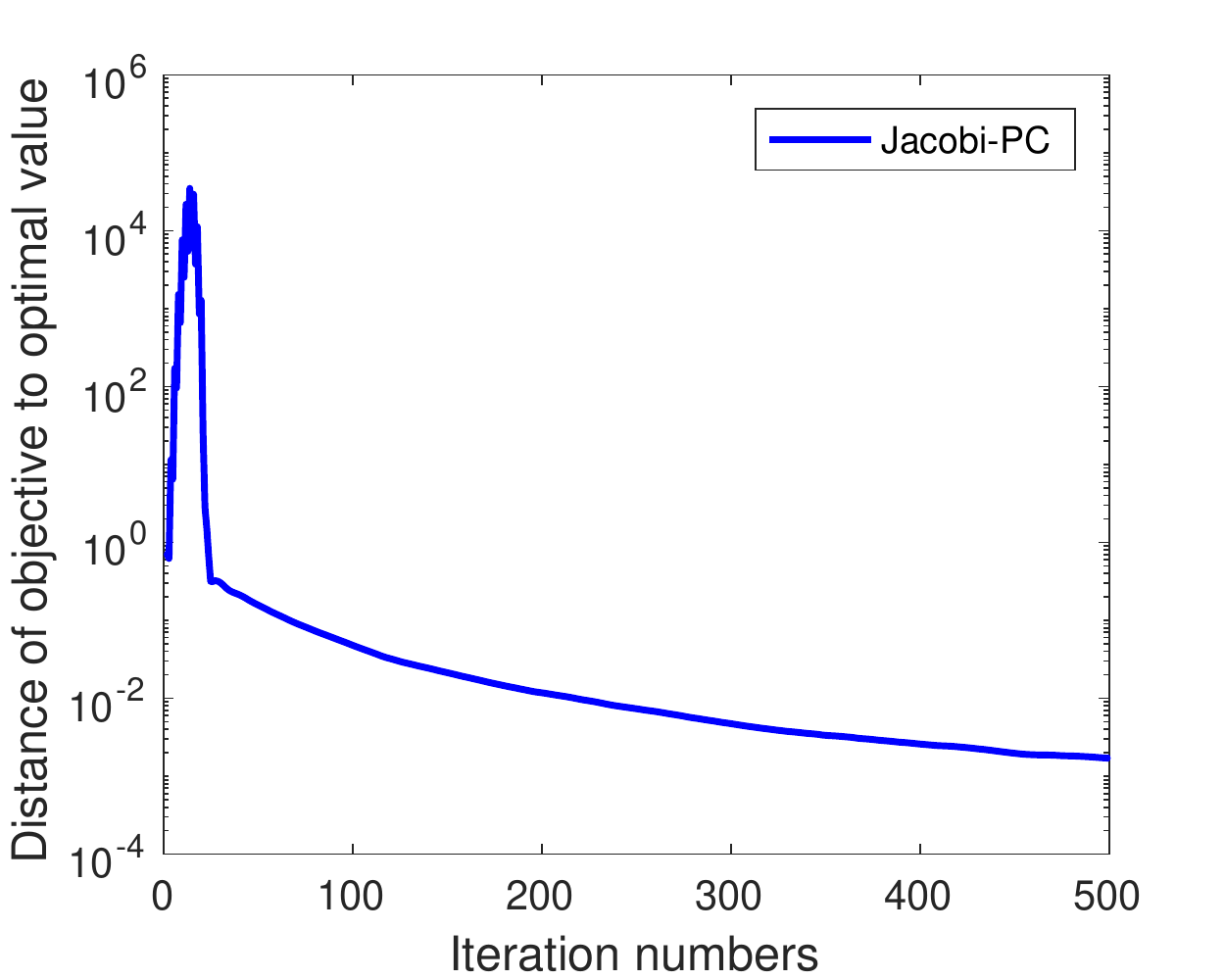} &
\includegraphics[width=0.35\textwidth]{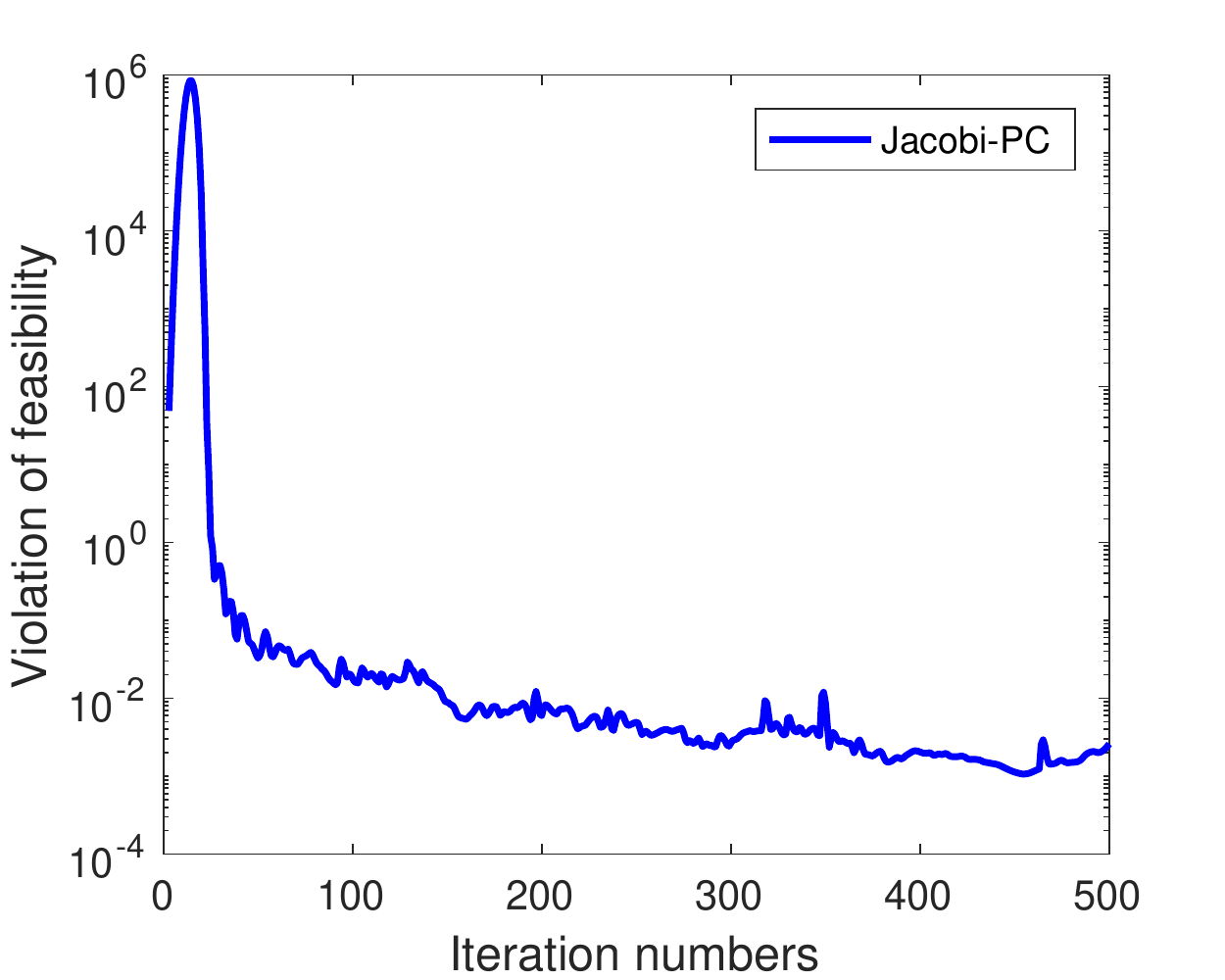}
\end{tabular}
\end{center}
\caption{Results by Jacobi-PC with $d^1=0.5$ and $d_{\mathrm{inc}}=0.1$ for solving the multiclass support vector machine problem \eqref{eq:msvm} on randomly generated data. Left: distance of the objective to optimal value $|F(X^k)-F(X^*)|$; Right: violation of feasibility $\|X^k e\|$.}\label{fig:svm-Jac}
\end{figure}

\begin{figure}[h]
\begin{center}
\begin{tabular}{cc}
\includegraphics[width=0.35\textwidth]{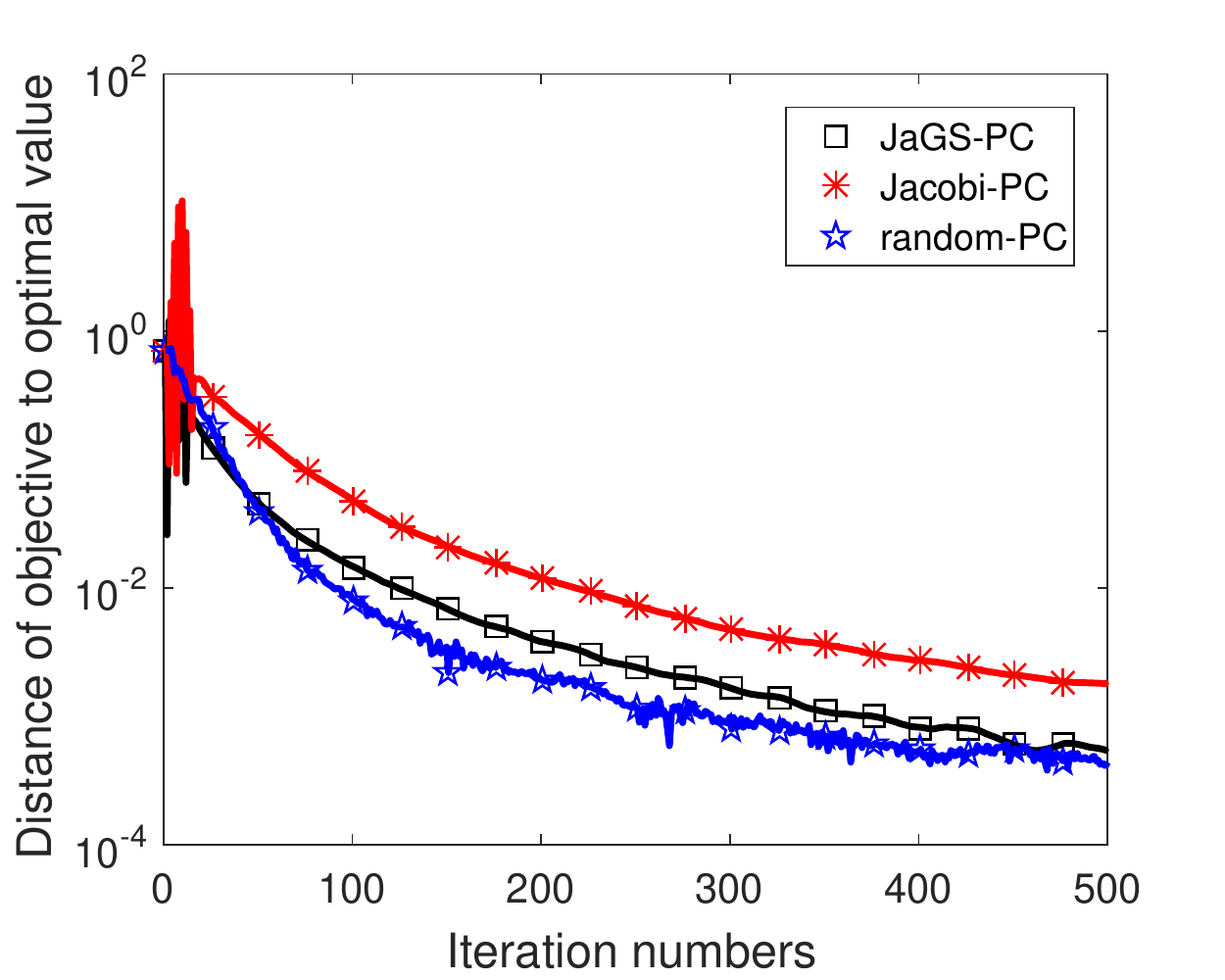} &
\includegraphics[width=0.35\textwidth]{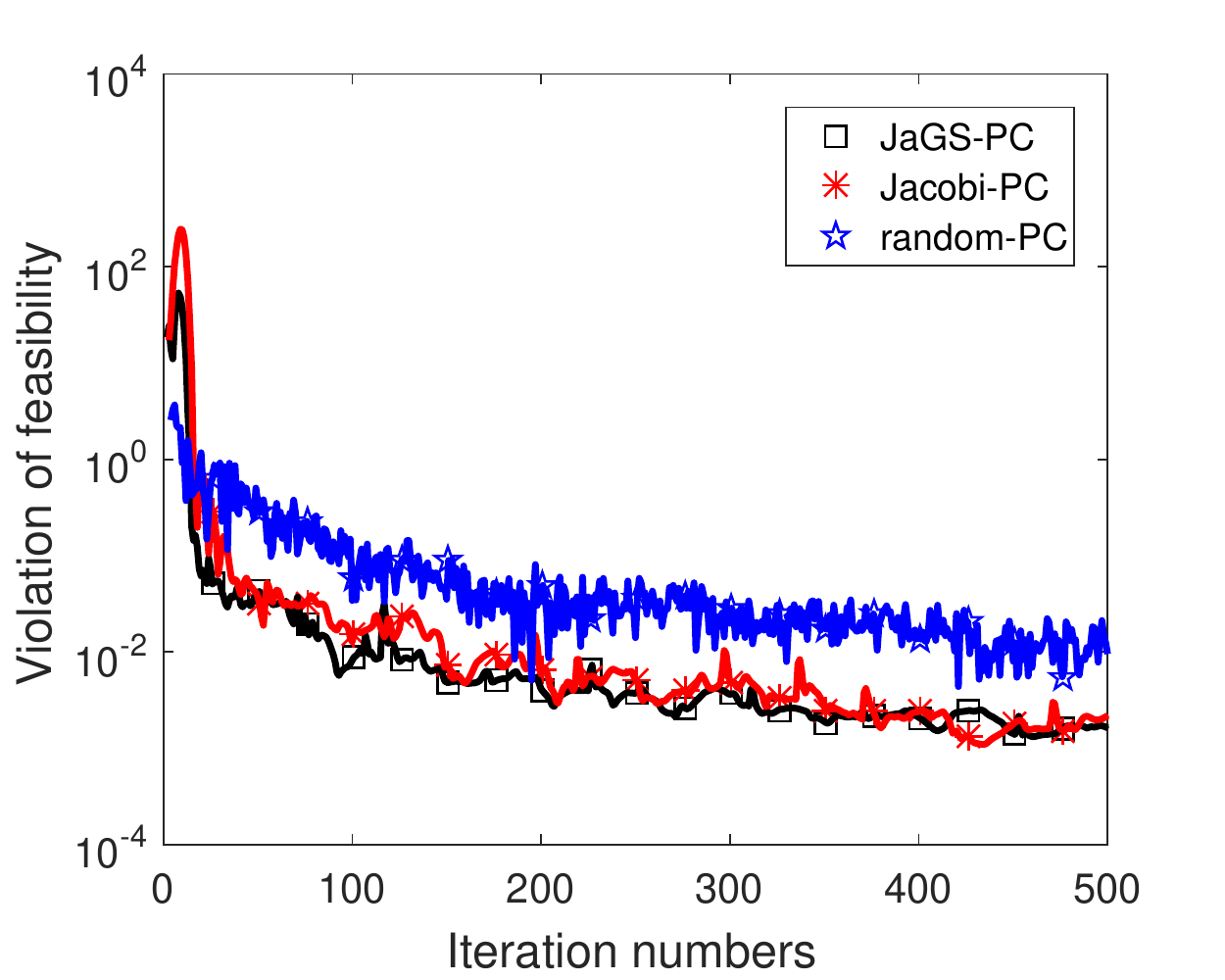}\\
\includegraphics[width=0.35\textwidth]{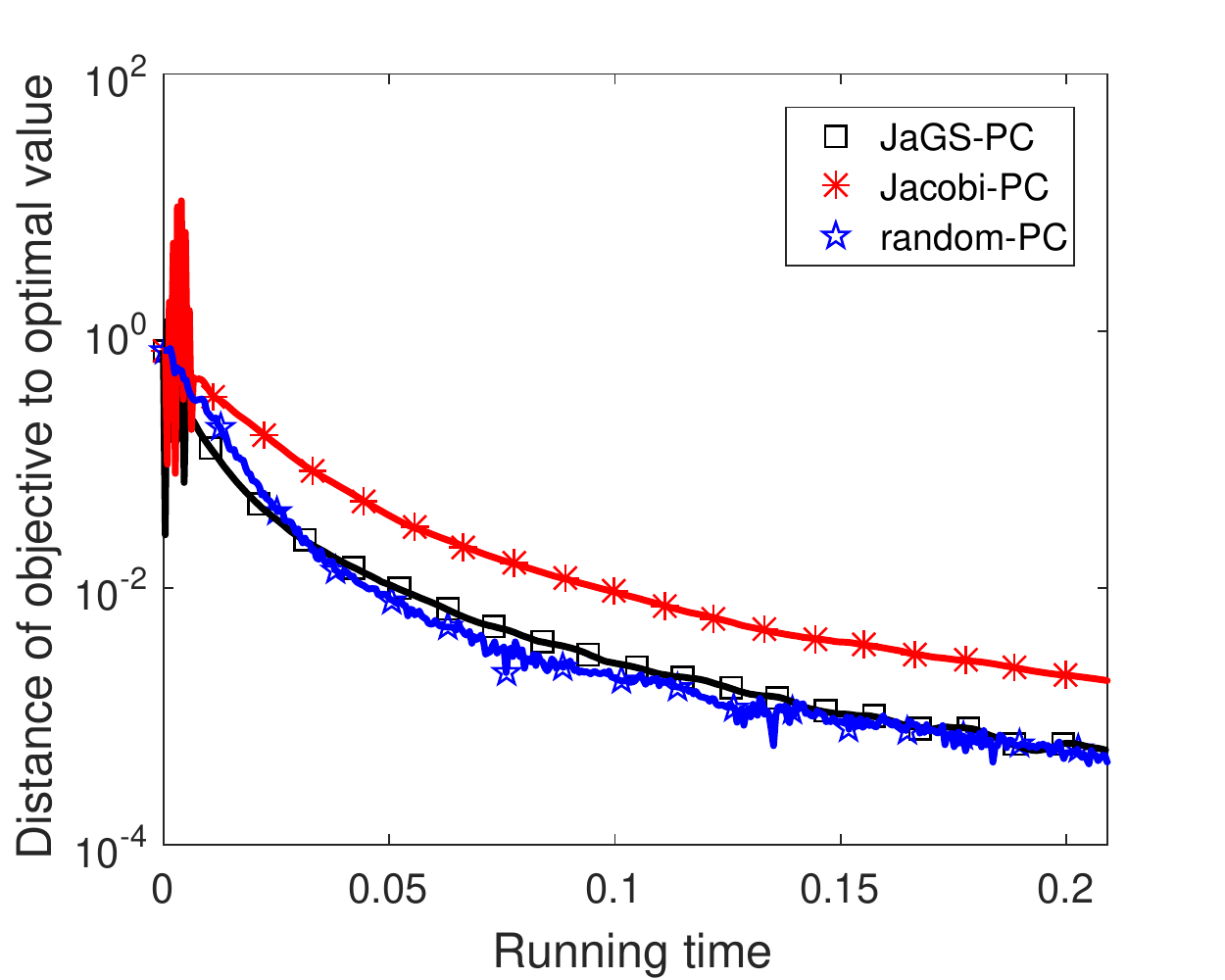} &
\includegraphics[width=0.35\textwidth]{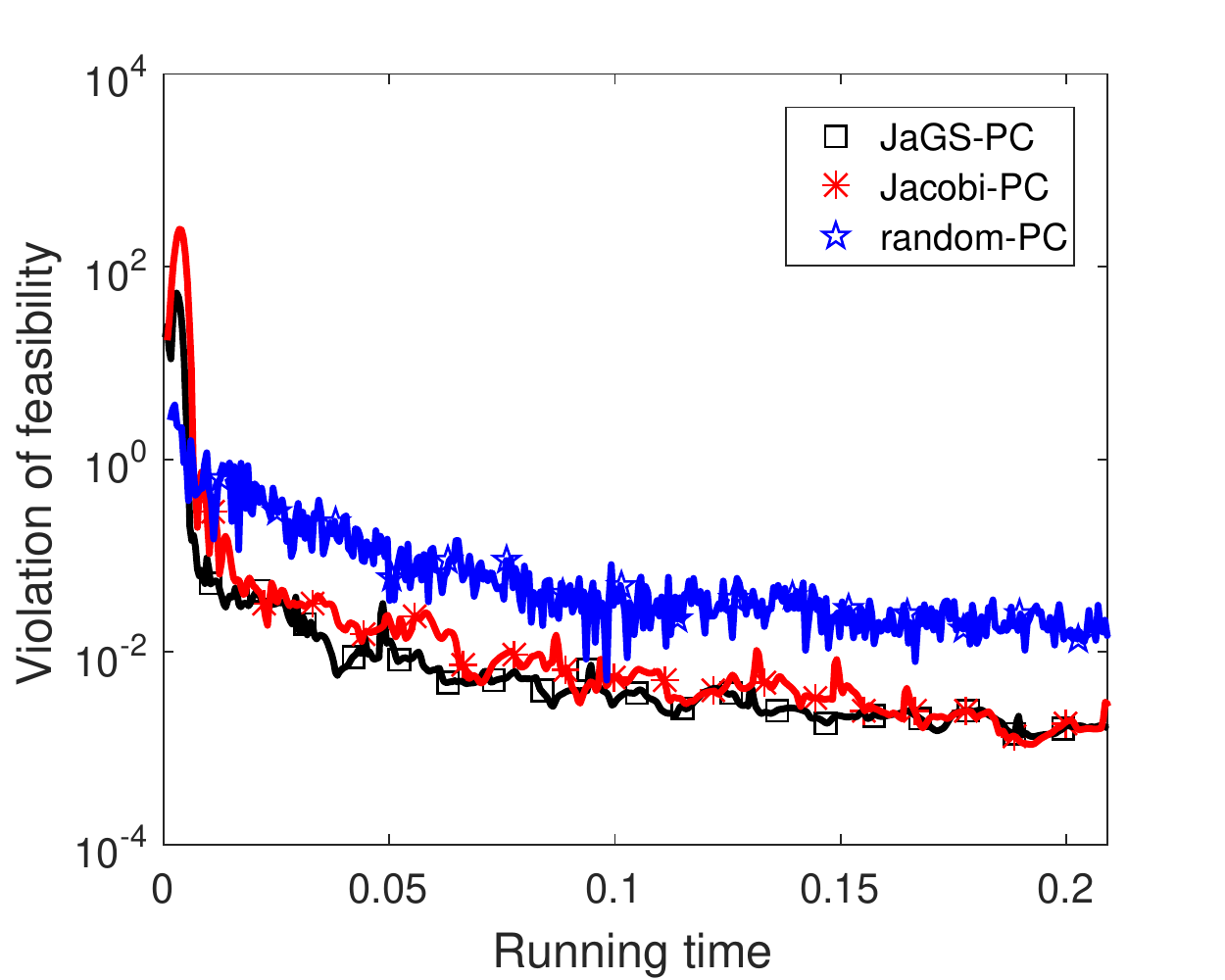}
\end{tabular}
\end{center}
\caption{Results by Jacobi-PC and JaGS-PC for solving the multiclass support vector machine problem \eqref{eq:msvm} on randomly generated data. Left: distance of the objective to optimal value $|F(X^k)-F(X^*)|$; Right: violation of feasibility $\|X^k e\|$. The running time of JaGS-PC includes that for finding the mixing matrix $W$.}\label{fig:svm}
\end{figure}

\section{Conclusions}\label{sec:conclusion}
We have proposed a hybrid Jacobian and Gauss-Seidel block coordinate update method for solving linearly constrained convex programming. The method performs each primal block variable update by minimizing a function that approximates the augmented Lagrangian at affinely combined points of the previous two iterates. We have presented a way to choose the mixing matrix with desired properties. Global iterate sequence convergence and also sublinear rate results of the hybrid method have been established. In addition, numerical experiments have been performed to demonstrate its efficiency. 

\section*{Acknowledgements} This work is partly supported by NSF grant DMS-1719549. The author would like to thank two anonymous referees for their careful review and constructive comments, which help greatly improve the paper.

\bibliographystyle{abbrv}
\bibliography{alm,bcd,mblk-adm}

\begin{thebibliography}{10}

\bibitem{boyd2004convex}
S.~Boyd and L.~Vandenberghe.
\newblock {\em Convex optimization}.
\newblock Cambridge university press, 2004.

\bibitem{cai2014directstrong}
X.~Cai, D.~Han, and X.~Yuan.
\newblock The direct extension of admm for three-block separable convex
  minimization models is convergent when one function is strongly convex.
\newblock {\em Optimization Online}, 2014.

\bibitem{candes2011robust-PCA}
E.~J. Cand{\`e}s, X.~Li, Y.~Ma, and J.~Wright.
\newblock Robust principal component analysis?
\newblock {\em Journal of the ACM (JACM)}, 58(3):11, 2011.

\bibitem{chen2016direct}
C.~Chen, B.~He, Y.~Ye, and X.~Yuan.
\newblock The direct extension of admm for multi-block convex minimization
  problems is not necessarily convergent.
\newblock {\em Mathematical Programming}, 155(1-2):57--79, 2016.

\bibitem{chen2015Ext-ADMM}
C.~Chen, M.~Li, X.~Liu, and Y.~Ye.
\newblock Extended admm and bcd for nonseparable convex minimization models
  with quadratic coupling terms: Convergence analysis and insights.
\newblock {\em arXiv preprint arXiv:1508.00193}, 2015.

\bibitem{chencai2013convergence}
C.~Chen, Y.~Shen, and Y.~You.
\newblock On the convergence analysis of the alternating direction method of
  multipliers with three blocks.
\newblock In {\em Abstract and Applied Analysis}, volume 2013. Hindawi
  Publishing Corporation, 2013.

\bibitem{combettes2015stochastic}
P.~L. Combettes and J.-C. Pesquet.
\newblock Stochastic quasi-fej{\'e}r block-coordinate fixed point iterations
  with random sweeping.
\newblock {\em SIAM Journal on Optimization}, 25(2):1221--1248, 2015.

\bibitem{cui2015convergence}
Y.~Cui, X.~Li, D.~Sun, and K.-C. Toh.
\newblock On the convergence properties of a majorized {ADMM} for linearly
  constrained convex optimization problems with coupled objective functions.
\newblock {\em arXiv preprint arXiv:1502.00098}, 2015.

\bibitem{dang2014randomized}
C.~Dang and G.~Lan.
\newblock Randomized methods for saddle point computation.
\newblock {\em arXiv preprint arXiv:1409.8625}, 2014.

\bibitem{davis2015-3op}
D.~Davis and W.~Yin.
\newblock A three-operator splitting scheme and its optimization applications.
\newblock {\em arXiv preprint arXiv:1504.01032}, 2015.

\bibitem{deng2016parallel}
W.~Deng, M.-J. Lai, Z.~Peng, and W.~Yin.
\newblock Parallel multi-block {ADMM} with $o(1/k)$ convergence.
\newblock {\em Journal of Scientific Computing}, pages 1--25, 2016.

\bibitem{feng2015cvg-LADMM}
J.-K. Feng, H.-B. Zhang, C.-Z. Cheng, and H.-M. Pei.
\newblock Convergence analysis of l-admm for multi-block linear-constrained
  separable convex minimization problem.
\newblock {\em Journal of the Operations Research Society of China},
  3(4):563--579, 2015.

\bibitem{gabay1976dual}
D.~Gabay and B.~Mercier.
\newblock A dual algorithm for the solution of nonlinear variational problems
  via finite element approximation.
\newblock {\em Computers $\&$ Mathematics with Applications}, 2(1):17--40,
  1976.

\bibitem{gao2014information}
X.~Gao, B.~Jiang, and S.~Zhang.
\newblock On the information-adaptive variants of the {ADMM}: an iteration
  complexity perspective.
\newblock {\em Optimization Online}, 2014.

\bibitem{GXZ-RPDCU2016}
X.~Gao, Y.~Xu, and S.~Zhang.
\newblock Randomized primal-dual proximal block coordinate updates.
\newblock {\em arXiv preprint arXiv:1605.05969}, 2016.

\bibitem{gao2015first}
X.~Gao and S.-Z. Zhang.
\newblock First-order algorithms for convex optimization with nonseparable
  objective and coupled constraints.
\newblock {\em Journal of the Operations Research Society of China}, pages
  1--29, 2015.

\bibitem{Glowinski1975}
R.~Glowinski and A.~Marrocco.
\newblock Sur l'approximation, par \/{e}l\'{e}ments finis d'ordre un, et la
  r\'{e}solution, par p\'{e}nalisation-dualit\'{e} d'une classe de
  probl\`{e}mes de dirichlet non lin\'{e}aires.
\newblock {\em ESAIM: Mathematical Modelling and Numerical Analysis},
  9(R2):41--76, 1975.

\bibitem{grant2008cvx}
M.~Grant, S.~Boyd, and Y.~Ye.
\newblock Cvx: Matlab software for disciplined convex programming, 2008.

\bibitem{han2012note}
D.~Han and X.~Yuan.
\newblock A note on the alternating direction method of multipliers.
\newblock {\em Journal of Optimization Theory and Applications},
  155(1):227--238, 2012.

\bibitem{he2015full}
B.~He, L.~Hou, and X.~Yuan.
\newblock On full {Jacobian} decomposition of the augmented {Lagrangian} method
  for separable convex programming.
\newblock {\em SIAM Journal on Optimization}, 25(4):2274--2312, 2015.

\bibitem{he2012ADM-GBS}
B.~He, M.~Tao, and X.~Yuan.
\newblock Alternating direction method with gaussian back substitution for
  separable convex programming.
\newblock {\em SIAM Journal on Optimization}, 22(2):313--340, 2012.

\bibitem{he2017convergence}
B.~He, M.~Tao, and X.~Yuan.
\newblock Convergence rate analysis for the alternating direction method of
  multipliers with a substitution procedure for separable convex programming.
\newblock {\em Mathematics of Operations Research}, 2017.

\bibitem{he2016proximal}
B.~He, H.-K. Xu, and X.~Yuan.
\newblock On the proximal jacobian decomposition of alm for multiple-block
  separable convex minimization problems and its relationship to admm.
\newblock {\em Journal of Scientific Computing}, 66(3):1204--1217, 2016.

\bibitem{he2012rate-DRS}
B.~He and X.~Yuan.
\newblock On the $o(1/n)$ convergence rate of the douglas--rachford alternating
  direction method.
\newblock {\em SIAM Journal on Numerical Analysis}, 50(2):700--709, 2012.

\bibitem{Hildreth-57}
C.~Hildreth.
\newblock A quadratic programming procedure.
\newblock {\em Naval Research Logistics Quarterly}, 4(1):79--85, 1957.

\bibitem{hong2014block}
M.~Hong, T.-H. Chang, X.~Wang, M.~Razaviyayn, S.~Ma, and Z.-Q. Luo.
\newblock A block successive upper bound minimization method of multipliers for
  linearly constrained convex optimization.
\newblock {\em arXiv preprint arXiv:1401.7079}, 2014.

\bibitem{hong2016iteration}
M.~Hong, X.~Wang, M.~Razaviyayn, and Z.-Q. Luo.
\newblock Iteration complexity analysis of block coordinate descent methods.
\newblock {\em Mathematical Programming}, pages 1--30, 2016.

\bibitem{hom1991topics}
R.~A. Horn and C.~R. Johnson.
\newblock Topics in matrix analysis.
\newblock {\em Cambridge UP, New York}, 1991.

\bibitem{james2012classo}
G.~M. James, C.~Paulson, and P.~Rusmevichientong.
\newblock The constrained lasso.
\newblock Technical report, Citeseer, 2012.

\bibitem{li2015convergent}
M.~Li, D.~Sun, and K.-C. Toh.
\newblock A convergent 3-block semi-proximal {ADMM} for convex minimization
  problems with one strongly convex block.
\newblock {\em Asia-Pacific Journal of Operational Research}, 32(04):1550024,
  2015.

\bibitem{li2016schur}
X.~Li, D.~Sun, and K.-C. Toh.
\newblock A schur complement based semi-proximal admm for convex quadratic
  conic programming and extensions.
\newblock {\em Mathematical Programming}, 155(1-2):333--373, 2016.

\bibitem{lin2015global}
T.~Lin, S.~Ma, and S.~Zhang.
\newblock On the global linear convergence of the admm with multiblock
  variables.
\newblock {\em SIAM Journal on Optimization}, 25(3):1478--1497, 2015.

\bibitem{LMZ2015JORSC}
T.~Lin, S.~Ma, and S.~Zhang.
\newblock On the sublinear convergence rate of multi-block admm.
\newblock {\em Journal of the Operations Research Society of China},
  3(3):251--274, 2015.

\bibitem{liu2016non}
Y.-F. Liu, X.~Liu, and S.~Ma.
\newblock On the non-ergodic convergence rate of an inexact augmented
  lagrangian framework for composite convex programming.
\newblock {\em arXiv preprint arXiv:1603.05738}, 2016.

\bibitem{monteiro2010iteration}
R.~D. Monteiro and B.~F. Svaiter.
\newblock Iteration-complexity of block-decomposition algorithms and the
  alternating direction method of multipliers.
\newblock {\em SIAM Journal on Optimization}, 23(1):475--507, 2013.

\bibitem{necoara2014random-cd}
I.~Necoara and A.~Patrascu.
\newblock A random coordinate descent algorithm for optimization problems with
  composite objective function and linear coupled constraints.
\newblock {\em Computational Optimization and Applications}, 57(2):307--337,
  2014.

\bibitem{nocedal2006numopt}
J.~Nocedal and S.~J. Wright.
\newblock {\em Numerical Optimization}.
\newblock Springer, 2006.

\bibitem{peng2016cf}
Z.~Peng, T.~Wu, Y.~Xu, M.~Yan, and W.~Yin.
\newblock Coordinate friendly structures, algorithms and applications.
\newblock {\em Annals of Mathematical Sciences and Applications}, 1(1):57--119,
  2016.

\bibitem{Peng_2016_AROCK}
Z.~Peng, Y.~Xu, M.~Yan, and W.~Yin.
\newblock A{R}ock: An algorithmic framework for asynchronous parallel
  coordinate updates.
\newblock {\em SIAM Journal on Scientific Computing}, 38(5):A2851--A2879, 2016.

\bibitem{pesquet2014class}
J.-C. Pesquet and A.~Repetti.
\newblock A class of randomized primal-dual algorithms for distributed
  optimization.
\newblock {\em arXiv preprint arXiv:1406.6404}, 2014.

\bibitem{razaviyayn2013unified}
M.~Razaviyayn, M.~Hong, and Z.-Q. Luo.
\newblock A unified convergence analysis of block successive minimization
  methods for nonsmooth optimization.
\newblock {\em SIAM Journal on Optimization}, 23(2):1126--1153, 2013.

\bibitem{shi2016primer-cd}
H.-J.~M. Shi, S.~Tu, Y.~Xu, and W.~Yin.
\newblock A primer on coordinate descent algorithms.
\newblock {\em arXiv preprint arXiv:1610.00040}, 2016.

\bibitem{sun2015convergent}
D.~Sun, K.-C. Toh, and L.~Yang.
\newblock A convergent 3-block semiproximal alternating direction method of
  multipliers for conic programming with 4-type constraints.
\newblock {\em SIAM journal on Optimization}, 25(2):882--915, 2015.

\bibitem{sun2015expected}
R.~Sun, Z.-Q. Luo, and Y.~Ye.
\newblock On the expected convergence of randomly permuted {ADMM}.
\newblock {\em arXiv preprint arXiv:1503.06387}, 2015.

\bibitem{tibshirani1996lasso}
R.~Tibshirani.
\newblock Regression shrinkage and selection via the lasso.
\newblock {\em Journal of the Royal Statistical Society. Series B
  (Methodological)}, pages 267--288, 1996.

\bibitem{toh1999sdpt3}
K.-C. Toh, M.~J. Todd, and R.~H. T{\"u}t{\"u}nc{\"u}.
\newblock {SDPT3}---a matlab software package for semidefinite programming,
  version 1.3.
\newblock {\em Optimization methods and software}, 11(1-4):545--581, 1999.

\bibitem{Tseng-01}
P.~Tseng.
\newblock Convergence of a block coordinate descent method for
  nondifferentiable minimization.
\newblock {\em Journal of Optimization Theory and Applications},
  109(3):475--494, 2001.

\bibitem{tseng2009block-linear}
P.~Tseng and S.~Yun.
\newblock Block-coordinate gradient descent method for linearly constrained
  nonsmooth separable optimization.
\newblock {\em Journal of optimization theory and applications},
  140(3):513--535, 2009.

\bibitem{tseng2009_CGD}
P.~Tseng and S.~Yun.
\newblock A coordinate gradient descent method for nonsmooth separable
  minimization.
\newblock {\em Mathematical Programming}, 117(1-2):387--423, 2009.

\bibitem{wang2015ncvx-admm}
Y.~Wang, W.~Yin, and J.~Zeng.
\newblock Global convergence of {ADMM} in nonconvex nonsmooth optimization.
\newblock {\em arXiv preprint arXiv:1511.06324}, 2015.

\bibitem{Warga-63}
J.~Warga.
\newblock Minimizing certain convex functions.
\newblock {\em Journal of the Society for Industrial and Applied Mathematics},
  11(3):588--593, 1963.

\bibitem{wright2013compress-PCP}
J.~Wright, A.~Ganesh, K.~Min, and Y.~Ma.
\newblock Compressive principal component pursuit.
\newblock {\em Information and Inference}, 2(1):32--68, 2013.

\bibitem{wright2015coordinate}
S.~J. Wright.
\newblock Coordinate descent algorithms.
\newblock {\em Mathematical Programming}, 151(1):3--34, 2015.

\bibitem{xu2017accelerated-alm}
Y.~Xu.
\newblock Accelerated first-order primal-dual proximal methods for linearly
  constrained composite convex programming.
\newblock {\em SIAM Journal on Optimization}, 27(3):1459--1484, 2017.

\bibitem{xu2015admm-msvm}
Y.~Xu, I.~Akrotirianakis, and A.~Chakraborty.
\newblock Alternating direction method of multipliers for regularized
  multiclass support vector machines.
\newblock In {\em International Workshop on Machine Learning, Optimization and
  Big Data}, pages 105--117. Springer, 2015.

\bibitem{xu2015HHSVM}
Y.~Xu, I.~Akrotirianakis, and A.~Chakraborty.
\newblock Proximal gradient method for huberized support vector machine.
\newblock {\em Pattern Analysis and Applications}, pages 1--17, 2015.

\bibitem{XY_2013_multiblock}
Y.~Xu and W.~Yin.
\newblock A block coordinate descent method for regularized multiconvex
  optimization with applications to nonnegative tensor factorization and
  completion.
\newblock {\em SIAM Journal on imaging sciences}, 6(3):1758--1789, 2013.

\bibitem{XY_2017_ecd}
Y.~Xu and W.~Yin.
\newblock A globally convergent algorithm for nonconvex optimization based on
  block coordinate update.
\newblock {\em Journal of Scientific Computing}, 72(2):700--734, 2017.

\end{thebibliography}

\end{document}